\newcommand{\Z}{\mathbb{Z}}
\newcommand{\C}{\mathbb{C}}
\newcommand{\menos}{\backslash}
\def\Z{\mathbb{Z}}                 
\def\C{\mathbb{C}}                   
\def\fa{{\mathcal{F}}}
\def\po{{\partial}}
\def\ro{{\rho}}
\def\vr{{\varphi}}
\def\la{{\lambda}}
\def\ov{\overline}
\def\bz{{\mathbb{Z}}}
\def\bc{{\mathbb{C}}}
\def\sing{\operatorname{{sing}}}
\def\Sat{\operatorname{{Sat}}}
\def\leaderfill{\leaders\hbox to .8em{\hss .\hss}\hfill}
\def\_#1{{\lower 0.7ex\hbox{}}_{#1}}
\def\fa{{\mathcal{F}}}
\def\po{{\partial}}
\def\ro{{\rho}}
\def\vr{{\varphi}}
\def\la{{\lambda}}
\def\ov{\overline}
\def\bz{{\mathbb{Z}}}
\def\bc{{\mathbb{C}}}
\def\sing{\operatorname{{sing}}}
\def\Sat{\operatorname{{Sat}}}
\renewcommand{\thefootnote}
\newtheorem{theorem}{Theorem}
\newtheorem{Theorem}{Theorem}
\newtheorem{Lemma}{Lemma}
\newtheorem{obs}{Remark}
\newtheorem{defi}{Definition}
\newtheorem{Claim}{Claim}
\theoremstyle{Definition}
\theoremstyle{Remark}
\numberwithin{equation}{section}
\title[On holomorphic $\C^*$-actions]{On holomorphic $\C^*$-actions}
\author{V. Le\'on and B. Sc\'ardua}
\address{V. Le\'on. ILACVN - CICN, Universidade Federal da Integração Latino-Americana, Parque tecnológico de Itaipu, Foz do Iguaçu-PR, 85867-970 - Brazil}
\email{victor.leon@unila.edu.br}
\address{B. Sc\'ardua. Instituto de Matem\'atica - Universidade Federal do Rio de Janeiro,
CP. 68530-Rio de Janeiro-RJ, 21945-970 - Brazil}
\email{bruno.scardua@gmail.com}
\keywords{Stein manifold, holomorphic flow, quasihomogeneous singularity,
foliation.}
\date{}
\subjclass[2000]{Primary 37F75, 57R30; Secondary 32M25, 32S65.}
\begin{document}

\maketitle

\begin{abstract} In this paper we study holomorphic actions of the complex multiplicative group on
complex manifolds around a singular (fixed) point. We prove linearization results for the germ of action
and also for the whole action under some conditions on the manifold. This can be seen as a follow-up to previous works of M. Suzuki and other authors.
\end{abstract}

\section{Introduction} \label{Section:intro}

A fundamental contribution to the study of holomorphic actions  on  Stein surfaces, was made by M. Suzuki who
introduced  the use of techniques of Theory of Foliations, Potential
Theory and the Theory of Analytic Spaces (cf. \cite{Suzuki1} and
\cite{Suzuki2}).  Given a holomorphic action $\vr$ of the complex multiplicative group $\bc^*$ on a complex analytic space $N^2$ of dimension two,  we denote by $\fa_\vr$ the holomorphic foliation on $N^2$ induced by $\vr$. In \cite{Suzuki1} it is proved that if $V$ is Stein then the  $\fa_\vr$ admits a meromorphic first integral. In a very sharp study (\cite{Suzuki2}) the same author proves that any  $\bc^*$-action on the affine space $\bc^2$ is analytically linearizable, {\it i.e.,} conjugate to an action $s \circ (x,y)=(s^n x, s^m y)$ where $s \in \bc^*, \, (x,y) \in \bc^2$, and $m,n \in \mathbb Z$.

As a sort of extension of the above result, in \cite{C-M-S} the authors consider the classification  of a Stein complex analytic space $V$ of dimension two, with a normal singularity $p\in V$,  endowed with a  $\bc^*$-action $\vr$  having a {\it dicritical singularity} at $p$, meaning that $p$  is a fixed point  such that   every non-singular orbit of $\vr$, close enough to $p$  accumulates at and  only at $p$. These are called {\it quasi-homogeneous singularities} in the framework of Analytic and Algebraic Geometry (\cite{Seade}).

In \cite{Camacho-Scardua} the authors study    a $\bc^*$-action $\vr$ with isolated singularities  on a nonsingular complex analytic Stein space $V$, the action having some nondicritical singularity and   without dicritical singularities. In few words it is proved analytic linearization at the level of leaf spaces.

\subsection{Suzuki's theory}
\label{section:suzukitheory}

For our current purposes Suzuki's main result probably is:

\begin{Theorem}[\cite{Suzuki2}]
Any analytic $\bc^*$-action on $\bc^2$ is analytically linearizable, {\it i.e.}, analytically equivalent to an operation of the form $s\circ (x,y)=(s^n x, s^m y), \, s \in \bc^*, (x,y)\in \bc^2$, for
some $n,m \in \mathbb N$.
\end{Theorem}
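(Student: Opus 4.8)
The plan is to pass from the action to its infinitesimal generator and linearize that vector field, the crucial extra input being the rigidity forced by Steinness of $\bc^2$. First I would compose $\varphi$ with the universal covering $\exp\colon\bc\to\bc^*$, $\tau\mapsto e^\tau$, obtaining a holomorphic $\bc$-action, i.e.\ a complete holomorphic flow $\phi_\tau=\varphi(e^\tau,\cdot)$ on $\bc^2$ whose infinitesimal generator is a complete holomorphic vector field $X$. The relation $e^{2\pi i}=1$ becomes the periodicity $\phi_{\tau+2\pi i}=\phi_\tau$; this is exactly the rigidity that distinguishes a genuine $\bc^*$-action from an arbitrary $\bc$-action, and I expect it to do the decisive work below. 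Away from the fixed locus the orbits are the leaves of the foliation $\fa_\varphi$ generated by $X$.

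Second, I would produce and normalize a fixed point. The maximal compact $\bs^1\subset\bc^*$ acts on $\bc^2\cong\re^4$, and since $\bc^2$ is contractible an Euler-characteristic/Lefschetz argument yields a fixed point $p$; place $p$ at the origin. The derivative $s\mapsto D_0\varphi(s,\cdot)\in\GL(2,\bc)$ is a finite-dimensional holomorphic representation of the reductive abelian group $\bc^*$, hence completely reducible into characters $s\mapsto s^k$. Thus in suitable linear coordinates $D_0\varphi(s,\cdot)=\mathrm{diag}(s^n,s^m)$ with $n,m\in\Z$, so $X$ has linear part $n\,x\,\po_x+m\,y\,\po_y$ at the origin; these are the candidate weights.

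Third, I would bring in Steinness: by the first-integral theorem for Stein surfaces, $\fa_\varphi$ admits a meromorphic first integral $F=P/Q$, constant along the orbits, whose generic fibers are the orbit closures. Combining $F$ with the eigenvalue data, I would construct two global holomorphic semi-invariants $u,v$, i.e.\ functions with $\phi_\tau^*u=e^{n\tau}u$ and $\phi_\tau^*v=e^{m\tau}v$ (equivalently $X(u)=nu$, $X(v)=mv$), so that in the linear model $F$ is the quasi-homogeneous ratio $u^m/v^n$. The point where periodicity intervenes is the removal of the Poincar\'e--Dulac resonant obstructions: in a resonant normal form such as $\dot x=nx$, $\dot y=my+c\,x^{k}$ with $m=kn$, the time-$2\pi i$ map sends $(x_0,y_0)$ to $(x_0,\,y_0+2\pi i\,c\,x_0^k)$, which equals the identity only if $c=0$. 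Hence $\phi_{2\pi i}=\Id$ forces $X$ to be analytically conjugate near the origin to its semisimple part, giving convergent local eigenfunctions $u,v$.

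The hard part is the global step: upgrading this convergent local linearization to one valid on all of $\bc^2$, and in particular showing the semi-invariants $u,v$ extend to globally defined holomorphic functions, separate the orbits, and together define a proper, injective map $(u,v)\colon\bc^2\to\bc^2$. Here purely local normal-form theory is insufficient; one must control the behavior of orbits toward infinity and exploit the Stein/affine structure through the meromorphic first integral and Suzuki's potential-theoretic estimates. Granting this, $(u,v)$ is a biholomorphism conjugating $\varphi$ to $s\circ(x,y)=(s^n x,s^m y)$, completing the argument.
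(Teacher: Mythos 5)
You should first note a structural point: the paper does not prove this statement at all --- it is quoted verbatim from Suzuki, with \cite{Suzuki2} as the attribution --- so the only fair comparison is with Suzuki's actual argument and with the machinery this paper builds nearby (Lemma~\ref{bochner}, Theorem~\ref{theorem:semilocallin}, Theorem~\ref{Theorem:surface}). Measured against that, your attempt has a genuine gap, and you name it yourself: the last paragraph begins the global step with ``Granting this.'' Everything before that point --- diagonalizing the isotropy representation $s\mapsto D_0\varphi(s,\cdot)$ into characters $s^n,s^m$, and using the periodicity $\phi_{\tau+2\pi i}=\phi_\tau$ to kill Poincar\'e--Dulac resonances and nilpotent parts --- is the easy half, and is exactly what the paper obtains more cleanly by Bochner averaging: Lemma~\ref{bochner} produces an entire intertwining map $F$ with $\psi^z\circ F=F\circ\varphi^z$, and Theorem~\ref{theorem:semilocallin} makes it a biholomorphism on a saturated neighborhood. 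The entire content of Suzuki's theorem is what comes after: showing the local linearization globalizes on $\bc^2$, i.e.\ that the semi-invariants $(u,v)$ extend, stay injective, and are proper. That is where \cite{Suzuki2} spends its effort, via the potential-theoretic study of the meromorphic first integral and the analytic structure of the orbit space --- and it is precisely the step your proposal defers. As written, you have a reduction of the theorem to its hard part, not a proof.

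There is also a concrete error in the fixed-point step. An Euler-characteristic/Lefschetz argument for the $\bs^1$-action does not apply: $\bc^2\cong\re^4$ is noncompact, Lefschetz-type theorems require compactness, and contractibility alone does not force a circle action on Euclidean space to have a fixed point (fixed-point-free smooth $\bs^1$-actions on Euclidean spaces exist in higher dimensions, so no soft topological principle can be invoked). In this holomorphic setting the fixed point must be extracted from the analytic structure, as the paper does in the proof of Theorem~\ref{Theorem:surface}: if the action were fixed-point free, the first integral of \cite{Suzuki1} would exhibit a holomorphic fibration with fibers $\bc^*$, which is excluded by $H_1(\bc^2,\bc)=0$. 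Two minor points besides: the weights a priori lie in $\mathbb Z$, not $\mathbb N$ (the statement's normalization needs a word about reparametrizing the action), and your claim that the generic fibers of the meromorphic first integral are orbit closures also rests on Suzuki's analysis rather than being automatic.
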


 The classification of holomorphic $\bc$-actions (i.e., holomorphic flows) with proper orbits on $\bc^2$ is also obtained by Suzuki (\cite{Suzuki2}, Theorem 4), and not all of them are linearizable.

\subsection{Quasi-homogeneous singularities: Orlik-Wagreich results}

 A 2-dimensional complex analytic variety $V$, with a distinguished
point $p\in V$,  is called a
{\it quasi-homogeneous} complex surface singularity,
if it admits a holomorphic action of the complex multiplicative group
$\bc^*=\bc \backslash \{0\}$ such that every non-singular orbit in a neighborhood of $p$, accumulates only at
$p\in V$ (see for instance \cite{Seade}, page 67 and \cite{CamachoScardua2, Scardua}).
Such an action is called a {\it good action}.
The study of algebraic quasi-homogeneous singularities is a main
topic in the theory of singularities with major contributions from Saito (\cite{saito71}), giving  an algebraic description in the local context, and from Orlik and Wagreich (\cite{Orlik}, \cite{Orlik2}). The later  studied algebraic affine
 varieties of dimension two,  with an isolated
singularity at the origin, under the hypothesis that the variety is invariant by an  algebraic action
of the form \,
$\sigma_Q(t,(z_{0},...,z_{n}))=(t^{q_{0}}z_{0},...,
t^{q_{n}}z_{n})$ where $Q=(q_0,...,q_n) \in\mathbb N^{n+1}$, i.e.
all $q_{i}$ are positive integers. Algebraic surfaces embedded in
$\mathbb{C}^{3}$ endowed with such an action are completely classified by them.

\subsection{Main results}
In this paper we study linearization of $\bc^*$-actions and apply our results to some of the previous works above.
We denote by $(\varphi,G,M,p)$ a holomorphic  action $\varphi$ of a complex Lie group $G$ on a complex manifold $M$ having $p \in M$ as fixed point (not necessarily isolated). Given an element $g\in G$ we shall denote by
 $\varphi ^g \colon M \to M$ the (biholomorphic) map $M \ni x\mapsto \varphi (g,x)$. Let $(\varphi,G,M,p)$ be  a holomorphic  action, with a fixed point at $p\in M$ and let $\xi: W \ni p \to \C^n$ be a local chart. Using the natural identification $T^{(1,0)}\C^n \cong \C^n$, we can use the derivative of the $\varphi$ at $p$ to define the following holomorphic action:
\begin{eqnarray*} \psi=\psi_{p,\xi}: G \times \C^n &\to& \C^n \\ (g,x) &\mapsto& \left[D \xi(p) \cdot D \varphi ^g(p) \cdot  \left(D \xi (p) \right)^{-1} \right] \cdot x.
\end{eqnarray*}
Although we have used $\xi$ to define $\psi$, any other chart defines an equivalent action. So we will refer to $\psi$ as {\it the action by linear transformations} defined by the derivative of $\varphi$ at $p$ and may  write $\psi=(D\varphi)_p$.

In the first part of this work, we study the case $M=\C^n$, $\xi \equiv Id$ and we obtain the following semilocal linearization result below:

\vglue.1in

\begin{theorem} [Semi-local linearization in the affine space]
\label{theorem:semilocallin}
Let $(\varphi,\C^*,\C^n,p)$ be a holomorphic  action of $\bc^*$ on $\bc^n$ having  $p\in \bc^n$ as a fixed point and let $(\psi, \C^*,\C^n,0)$ be the action $\psi=(D\varphi)_p$  by linear transformations  defined by the derivative of $\varphi$ at $p$. Then there is an entire   map $F\colon  \C^n \to \C^n$ such that $F(p)=0$ and
$$\psi^z \circ F \equiv F \circ \varphi^z$$
for each $z \in \C^*$. Moreover, there is a $\varphi$-invariant  open set $W \ni p$ in $\C^n$ such that $F|_W:W \to F(W)$ is a biholomorphism.
\end{theorem}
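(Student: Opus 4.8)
The plan is to construct $F$ from the Laurent expansion of $\varphi$ in the group variable, and then to invert it equivariantly to produce $W$. After a translation we may assume $p=0$, so that $\varphi^z(0)=0$ for all $z$ and the normalization $F(0)=0$ is the natural one. Since $\varphi\colon\bc^*\times\C^n\to\C^n$ is holomorphic, for each fixed $y$ the map $z\mapsto\varphi^z(y)$ is holomorphic on $\bc^*$ and therefore expands as a Laurent series $\varphi^z(y)=\sum_{k\in\Z}z^k a_k(y)$, whose coefficients $a_k(y)=\frac{1}{2\pi i}\oint_{|z|=r}\varphi^z(y)\,z^{-k-1}\,dz$ are entire maps $\C^n\to\C^n$, independent of $r$. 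The fixed-point condition $\varphi^z(0)=0$ forces $a_k(0)=0$ for every $k$.

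The first substantive step is to exploit the homomorphism property. Substituting the expansion into $\varphi^{zw}=\varphi^z\circ\varphi^w$ and comparing coefficients of $z^k$ gives the semi-invariance $a_k\circ\varphi^w=w^k a_k$ for all $w\in\bc^*$ and $k\in\Z$. Differentiating the same expansion at $0$ yields $A(z)=D\varphi^z(0)=\sum_k z^k A_k$ with $A_k:=Da_k(0)$, and comparing coefficients in $A(zw)=A(z)A(w)$ forces $A_kA_l=\delta_{kl}A_k$ together with $\sum_k A_k=\Id$. Thus the $A_k$ are orthogonal idempotents; in particular only finitely many are nonzero, and $\psi$ is diagonalizable with integer weights (a fact I will only invoke at the very end).

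Then I would simply set $F:=\sum_{k}A_k\,a_k$, an entire map. Using $A_kA_l=\delta_{kl}A_k$ and the semi-invariance, one checks directly that $\psi^z\circ F=A(z)\sum_k A_k a_k=\sum_k z^k A_k a_k=\sum_k A_k\,(a_k\circ\varphi^z)=F\circ\varphi^z$. Moreover $F(0)=0$, and $DF(0)=\sum_k A_kA_k=\sum_k A_k=\Id$, so by the inverse function theorem $F$ restricts to a biholomorphism of some ball onto a neighborhood of $0$. This establishes every assertion of the theorem except the existence of the invariant set $W$.

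For $W$ the difficulty is that $F$ need not be injective on the whole $\varphi$-saturation of a ball, since an orbit may re-enter the chart along a different branch of $F^{-1}$. I would control this by adapting the target neighborhood to $\psi$. Set $\Sigma=\{\det DF\neq0\}$; the chain-rule identity $DF(\varphi^z q)\,D\varphi^z(q)=A(z)\,DF(q)$ shows $\Sigma$ is open, $\varphi$-invariant, and contains $0$. Choose a ball $U_0\subset\Sigma$ on which $F$ is injective, a polydisc $V\Subset F(U_0)$ around $0$ in coordinates diagonalizing $\psi$, and put $U:=G(V)$ with $G=(F|_{U_0})^{-1}$ and $W:=\bigcup_{z}\varphi^z(U)$. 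The elementary point is that each $\psi$-orbit meets such a polydisc in a connected set: in the diagonal model the condition $\psi^{e^t}(\eta)\in V$ amounts to finitely many inequalities on $\Re t$, cutting out a convex vertical strip. Given this, if $F(\varphi^{z_1}u_1)=F(\varphi^{z_2}u_2)$ with $u_i\in U$, then $F(u_1)=\psi^{c}F(u_2)$ for some $c$, and connectedness of the orbit trace in $V$ lets me follow the source orbit $z\mapsto\varphi^z(u_2)$ inside $U$ until it reaches $u_1$, giving $\varphi^{z_1}u_1=\varphi^{z_2}u_2$; since $W\subset\Sigma$ is invariant, $F|_W$ is then an injective local biholomorphism onto $F(W)=\bigcup_z\psi^z(V)$. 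The step I expect to be the genuine obstacle is precisely this last one: keeping the source orbit inside the chart, which requires a clopen/continuity argument (with $F$ injective on a slightly larger closed ball and $U\Subset U_0$). The monodromy around the generator of $\pi_1(\bc^*)$ is automatically trivial, because the induced flows satisfy $\Phi_{2\pi i}=\Id=\Psi_{2\pi i}$, so all the real work is in this return analysis, and it is exactly what confines the conclusion to the semi-local setting rather than a global one.
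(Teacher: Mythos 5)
Your proposal is correct, and its first half coincides with the paper's proof in disguise: your map $F=\sum_k A_k a_k$ is literally the paper's Bochner-type average $F_p(x)=\int_0^1 D\phi^{-s}(p)\cdot\left[\phi^s(x)-p\right]ds$ (with $\phi^s=\varphi^{\exp(2\pi\sqrt{-1}\,s)}$), as one sees by writing $a_k(y)=\frac{1}{2\pi\sqrt{-1}}\oint_{|z|=1}\varphi^z(y)\,z^{-k-1}dz$ and $A(z)^{-1}=\sum_k z^{-k}A_k$ and substituting $z=e^{2\pi\sqrt{-1}\,s}$; your idempotent relations $A_kA_l=\delta_{kl}A_k$ are also exactly what underlies the paper's terser remark that the linear part has no nilpotent Jordan block because the isotropy contains $\Z$, and your derivation of that point is if anything cleaner. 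Where you genuinely diverge is the semi-local step. The paper (Lemma 2 together with the Claim in the proof of Theorem A) fixes a ball in diagonalizing coordinates, proves that $\psi^z(V)\cap V$ is connected for \emph{each fixed} $z$ via a radial-segment argument, and uses this plus the identity principle to show that the locally defined map $y\mapsto\varphi^{z^{-1}}\circ F^{-1}\circ\psi^z(y)$ is a well-defined global inverse on $\Sat(U,\varphi)$, the well-definedness being checked by an isotropy-group computation; you instead use connectivity of each \emph{orbit trace} in a polydisc (your convex vertical strip in $\Re t$) and a clopen continuation along the source orbit, obtaining injectivity directly rather than constructing the inverse. Both connectivity devices are correct and serve the same purpose; the paper's buys an explicit inverse $T$ on the saturation (reused later for Theorem B), while yours is more self-contained. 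The step you flag as the obstacle does go through exactly as you sketch: with $V\Subset F(U_0)$ one gets $\overline{U}\subset G(\overline{V})\subset U_0$, the path $\gamma(s)=\varphi^{e^{st_c}}(u_2)$ satisfies $F\circ\gamma\subset V$ by convexity of the strip, and any limit point $\gamma(s^*)$ lies in $G(\overline{V})\subset U_0$ with $F(\gamma(s^*))\in V$, so $\gamma(s^*)=G(F(\gamma(s^*)))\in U$ by injectivity of $F$ on $U_0$, making $\{s:\gamma(s)\in U\}$ clopen in $[0,1]$. So there is no genuine gap, only an unexecuted (but correctly outlined) detail.
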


Next we study actions on complex manifolds. Let $M$ be a complex manifold  of dimension $n$  and $\vr:\mathbb{C^*}\times M\to M$ a holomorphic action of the group $\mathbb{C^*}$ on $M$. Denote by $\fa_\vr$ the
one-dimensional holomorphic foliation with isolated singularities on $M$ induced by $\vr$. The one-dimensional orbits of $\vr$ are {\it leaves} of $\fa_\vr$ and the {\it singular set} of  $\sing(\fa_\vr)$ is contained in the set of  fixed points of $\vr$. Following the foliation theory terminology, an isolated singularity $p$ of  a one-dimension holomorphic foliation $\fa$ in dimension two will be called  {\it dicritical} if, for some neighborhood $W$ of $p$, there are infinitely many leaves of the restriction $\mathcal {F}\big|_{W}$ accumulating {\em only at} $p$. The closure $\ov{L_W}=L_W \cup \{p\}\subset W$  of any such a local leaf $L_W \subset W$ is an invariant local analytic
curve in $W$,  called a {\it separatrix} of $\mathcal{F}$ through $p$. Thus,
a dicritical singularity exhibits infinitely many separatrices. A singularity $p\in V$ of a good $\mathbb C^*$-action on $V$ is clearly  dicritical.  With a notion of dicricity similar to the one above we obtain the following linearization result:

\vglue.1in
\begin{theorem}[Local linearization on a manifold]
  \label{Theorem:B}\, Let $(\varphi,\C^*,M,p)$ be a holomorphic  action on a complex manifold $M$, with an isolated fixed point at $p\in M$ such that $p$ is of the dicritical type. Let  $(\psi, \C^*,\C^n,0)$ be the action by linear transformations defined by the derivative of $\psi=(D\varphi)_p$ of $\varphi$ at $p$. There are a $\varphi$-invariant open set $A\ni p$ in $M$, a $\psi$-invariant open set $B\ni 0$ in $\C^n$ and a biholomorphism
$T\colon  A \to B$ such that
$$\psi^z \circ T  \equiv T \circ \varphi^z$$
for each $z \in \C^*$.\end{theorem}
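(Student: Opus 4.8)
The plan is to reduce the statement to a germ-linearization at $p$ together with a propagation of the linearizing map along the orbits of $\varphi$, using the dicritical hypothesis to control the domain. This is the manifold counterpart of Theorem~\ref{theorem:semilocallin}: where the affine case produces a single entire conjugacy $F$ on $\C^n$, the dicritical basin will play the role of the invariant domain and propagation along orbits will replace the global construction.

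First I would analyze $\psi=(D\varphi)_p$. Since $\C^*$ is abelian and the homomorphism $z\mapsto D\varphi^z(p)\in\GL(n,\C)$ is holomorphic, it is simultaneously diagonalizable, and each eigenvalue is a holomorphic character of $\C^*$, hence of the form $z\mapsto z^{n_i}$ with $n_i\in\Z$. In suitable linear coordinates $\psi^z(x)=(z^{n_1}x_1,\dots,z^{n_n}x_n)$. The next point is to read off the dicritical hypothesis: because $p$ is an isolated fixed point at which every nearby orbit accumulates only at $p$, no weight can vanish and they must all have the same sign; replacing $z$ by $1/z$ if necessary I may assume $n_i>0$ for all $i$. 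Thus for $|z|<1$ the map $\psi^z$ is a linear contraction and the spectrum lies in the Poincar\'e domain.

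Next I would linearize the germ of $\varphi$ at $p$. Fix a chart $\xi\colon W\to\C^n$ with $\xi(p)=0$ normalized so that the linear part of $\xi\circ\varphi^z\circ\xi^{-1}$ at $0$ is exactly $\psi^z$. Restricting to the maximal compact subgroup $S^1=\{|z|=1\}\subset\C^*$ gives a holomorphic action of a compact group fixing $0$; by Bochner's averaging/linearization theorem this $S^1$-action is holomorphically linearizable near $0$, producing a germ of biholomorphism $T_0$ with $T_0(0)=0$, $DT_0(0)=\Id$, and $\psi^z\circ T_0=T_0\circ\varphi^z$ (in the chart) for $z\in S^1$. I would then promote this identity from $S^1$ to all of $\C^*$: both sides are holomorphic in $z$ and agree on the circle, so by the identity principle along $\C^*=S^1\cdot\R_{>0}$ the relation holds for every $z\in\C^*$; equivalently, the Poincar\'e--Dulac resonant terms are forced to vanish by complete reducibility of the reductive group $\C^*$. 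This is the step I expect to be the main obstacle, since it is exactly here that the full group is needed: a single contraction $\varphi^{z_0}$ generally admits only a polynomial normal form, and it is the commuting family that yields genuine linearity.

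With the germ conjugacy $T_0$ defined on a small invariant neighborhood $W_0\ni p$, I would spread it over the whole dicritical basin by setting
$$A=\{x\in M:\ \varphi^z(x)\in W_0 \text{ for some } z\in\C^*\},$$
which is open, $\varphi$-invariant, and contains $p$, since dicriticality gives $\varphi^z(x)\to p$ as $z\to 0$. For $x\in A$ I define
$$T(x)=(\psi^z)^{-1}\big(T_0(\varphi^z(x))\big),$$
for any $z$ with $\varphi^z(x)\in W_0$; the germ relation and the group law show this is independent of $z$, so $T$ is a well-defined holomorphic extension of $T_0$. Equivariance $\psi^w\circ T=T\circ\varphi^w$ is then immediate from the definition. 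Finally, since $DT(p)$ is invertible, $T$ is a local biholomorphism at $p$; injectivity on all of $A$ follows by pushing any two points with equal image into $W_0$ by a common small $z$ and invoking injectivity of $T_0$ together with equivariance. Hence $B:=T(A)$ is open and $\psi$-invariant and $T\colon A\to B$ is the desired biholomorphism.
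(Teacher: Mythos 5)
Your proposal is correct and follows essentially the same route as the paper: your Bochner step is exactly the paper's Lemma~\ref{lli} (averaging the $1$-periodic flow over $[0,1]$, i.e.\ over $S^1$, then extending the conjugacy from the circle to all of $\C^*$ by the identity principle and a subgroup argument), and your propagation $T=(\psi^z)^{-1}\circ T_0\circ \varphi^z$ over the saturated set $A$ is precisely the construction of the paper's Lemma~\ref{sat}. The one point you dispatch in a single clause --- independence of $T(x)$ from the choice of $z$ --- is exactly what the paper's Lemmas~\ref{comp} and~\ref{sat} formalize, using the same-sign weights to get nested balls $\psi^z(E)\subset E$ or $\psi^z(E)\supset E$, hence connected overlaps on which the identity principle upgrades the germ relation to the full overlap.
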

In the last section we apply our techniques to the classification of $\bc^*$-actions on Stein manifolds, in particular we obtain variants of Suzuki's results above (cf. Theorems~\ref{Theorem:dicriticalStein} and ~\ref{Theorem:surface}).

\noindent{\bf Acknowledgement}. This work was partially supported by Faperj and CNPq. On behalf of all authors, the corresponding author states that there is no conflict of interest. After submitting this manuscript we were informed that 
some of our results are connected with Theorem 1.4 in \cite{K-S}. We are grateful to the authors for letting us know that.
Our techniques are quite analytical and also apply to non-Stein manifolds, as one can check in Theorem~\ref{Theorem:B}.

\section{Local linearization of $\C^*$-actions}

In this section we prove Theorem~A. We shall write $\psi_p$ to denote the action $(D\varphi)_p$. We begin with the following lemma, a sort of local version of Bochner-Cartan theorem for our situation:

\begin{Lemma}\label{bochner}Let $(\varphi,\C^*,\C^n,p)$ be a holomorphic  action with a fixed point at $p$ and let $(\psi_{p}, \C^*,\C^n,0)$ be the action by linear transformations defined by the derivative of $\varphi$ at $p$. There is a holomorphic  map $F_p: \C^n \to \C^n$ such that:
\begin{itemize}\item[$i.$]$\psi_p^z \circ F_p \equiv F_p \circ \varphi^z, \ \forall z \in \C^*$.
               \item[$ii.$]$F_p(p)=0$ e $DF_p(p)=Id$.
\end{itemize}
\end{Lemma}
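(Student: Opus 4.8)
The plan is to imitate the classical averaging argument of Bochner, using the maximal compact subgroup $S^1=\{z\in\bc^*:|z|=1\}$ of $\bc^*$, and then to upgrade the resulting $S^1$-equivariance to full $\bc^*$-equivariance by analytic continuation. Since $p$ is fixed, the chain rule applied to $\varphi^{zw}=\varphi^z\circ\varphi^w$ shows that $A(z):=D\varphi^z(p)$ defines a holomorphic homomorphism $A\colon\bc^*\to\GL(n,\bc)$, and with $\xi\equiv\Id$ one has $\psi_p^z(x)=A(z)\,x$. I would then define
$$F_p(x)=\frac{1}{2\pi}\int_0^{2\pi} A(e^{i\theta})^{-1}\big(\varphi^{\,e^{i\theta}}(x)-p\big)\,d\theta .$$
Because each $\varphi^w$ is an entire map of $\bc^n$ and the integrand is holomorphic in $x$ and continuous in $\theta$ on the compact circle, differentiation under the integral sign (or a Morera/uniform-limit argument applied to Riemann sums of holomorphic maps) shows that $F_p\colon\bc^n\to\bc^n$ is holomorphic.

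The normalization conditions in $(ii)$ are then direct computations. Since $\varphi^{w}(p)=p$, the integrand vanishes at $x=p$, whence $F_p(p)=0$; and differentiating under the integral at $p$ gives $DF_p(p)=\frac{1}{2\pi}\int_0^{2\pi}A(e^{i\theta})^{-1}A(e^{i\theta})\,d\theta=\Id$. For equivariance on the circle, fix $z$ with $|z|=1$ and compute $F_p(\varphi^z(x))$ using $\varphi^w\circ\varphi^z=\varphi^{wz}$ and the change of variables $u=wz$; invariance of $d\theta$ under rotation together with the homomorphism identity $A(uz^{-1})^{-1}=A(z)A(u)^{-1}$ pulls the linear factor $A(z)$ out of the integral, yielding
$$F_p(\varphi^z(x))=A(z)\,F_p(x)=\psi_p^z\big(F_p(x)\big)\qquad(|z|=1).$$

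The decisive step is to remove the restriction $|z|=1$. For a fixed $x\in\bc^n$, both maps $z\mapsto \psi_p^z(F_p(x))=A(z)F_p(x)$ and $z\mapsto F_p(\varphi^z(x))$ are holomorphic on the connected domain $\bc^*$ (the first since $A$ is holomorphic, the second since $\varphi$ is jointly holomorphic and $F_p$ is holomorphic). They agree on $S^1$, which accumulates in $\bc^*$, so the identity theorem forces them to agree on all of $\bc^*$, giving $(i)$ for every $z\in\bc^*$. I expect the only genuine subtlety to be precisely this last point: it is the fact that $\bc^*$ is the complexification of $S^1$ — so that a holomorphic function on $\bc^*$ is pinned down by its restriction to the circle — that lets Bochner averaging over the compact subgroup produce equivariance for the entire noncompact group. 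The holomorphy of the averaged map and the interchange of $D$ with the integral are routine by comparison.
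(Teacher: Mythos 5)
Your proof is correct and is essentially the paper's own argument: the paper sets $\phi^s=\varphi^{\exp(2\pi\sqrt{-1}\,s)}$ and defines $F_p(x)=\int_0^1 D\phi^{-s}(p)\cdot\left[\phi^s(x)-p\right]ds$, which is exactly your circle average $\frac{1}{2\pi}\int_0^{2\pi}A(e^{i\theta})^{-1}(\varphi^{e^{i\theta}}(x)-p)\,d\theta$ after the substitution $\theta=2\pi s$, and it likewise obtains equivariance first on the compact parameter set (the segment $[0,1]$, i.e.\ your circle $S^1$) by a change of variables exploiting periodicity, then extends to the whole group by the identity theorem applied to $Q_x(z)=\left[D\phi^z(p)\cdot F_p-F_p\circ\phi^z\right](x)$. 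The only cosmetic difference is that the paper works with the $1$-periodic flow on the universal cover $\C$ rather than directly with the homomorphism $A(z)=D\varphi^z(p)$ on $\C^*$.
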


\begin{proof}[Proof]Consider the action (holomorphic flow) on $\mathbb C^n$
\begin{eqnarray*} \phi: \C \times \C^n &\to& \C^n \\  (z,x) &\to& \varphi (\exp(2 \pi \sqrt{-1}z), x).
\end{eqnarray*}
We define the holomorphic map
\begin{eqnarray*} F_p: \C^n &\to& \C^n \\ x &\mapsto& \int_0^1 D \phi ^{-s}(p) \cdot \left[\phi ^s(x) - p\right]ds.
\end{eqnarray*}
We write $F_p$ in the form
$$F_p(x) = \int_0^1 D \phi ^{-s}(p) \cdot \phi ^s(x)ds - c(p),$$
where $c(p)= \int_0^1 D \phi ^{-s}(p) \cdot p \ ds$. If $0 \leq  t \leq 1$, then
\begin{eqnarray}\label{equa1}\nonumber D \phi ^t(p) \cdot \int_0^1 D \phi ^{-s}(p) \cdot \phi ^s(x)ds &=& \int_0^1 D\phi ^{t-s}(p) \cdot \phi ^s(x)ds \\ &=& \int_{-t}^{1-t}D \phi ^{-u}(p) \cdot \phi ^{u+t}(x)du \nonumber \\ &=& \int_{-t}^0\cdots + \int_0^{1-t}\cdots. \end{eqnarray}
Since $\varphi ^1 \equiv Id$, we have that
\begin{eqnarray}\label{equa2}\nonumber  \int_{-t}^0 D \phi ^{-u}(p) \cdot \phi ^{u+t}(x) du &=& \int_{-t}^0 D \phi ^{-u}(p) \cdot D \phi ^{-1}(p) \cdot \phi ^1 \circ \phi ^{u+t}(x) du \\&=& \int_{-t}^0 D \phi ^{-u -1} (p) \cdot \phi ^{u+1+t}(x) du \nonumber \\ &=& \int_{1-t}^1 D\phi ^{- \alpha}(p) \cdot \phi ^{\alpha + t}(x) d\alpha. \end{eqnarray}
It follows from ($\ref{equa1}$) and ($\ref{equa2}$) that
\begin{equation}\label{equa3}D \phi ^t(p) \cdot \int_0^1 D \phi ^{-s}(p) \cdot \phi ^s(x)ds = \int_0^1 D \phi ^{-u}(p) \cdot \phi ^{u+t}(x) du.\end{equation}
A similar computation shows that $D \phi ^t(p)\cdot c(p)=c(p)$. This, together with ($\ref{equa3}$) implies that
$$D \phi ^t(p) \cdot F_p \equiv F_p \circ \phi ^t,$$
for all $t \in [0,1]$. Now, fix an arbitrary $x \in
\C^n$. The map
\begin{eqnarray*}Q_x: \C &\to& \C^m \\ z &\mapsto& \left[D\phi ^z(p)\cdot F_p- F_p \circ \phi ^z\right](x) \end{eqnarray*}
is holomorphic. Since $Q_x|_{[0,1]} \equiv 0$, we have that $Q_x$ is identically zero. Since $x$  was taken arbitrarily,
the proof is finished.
\end{proof}


\begin{Lemma}\label{lema2}Let $(\varphi,\C^*,\C^n,p)$ be a holomorphic  action with a fixed point at $p$ and let $(\psi_{p}, \C^*,\C^n,0)$ be the action by linear transformations defined by the derivative of $\varphi$ at $p$. Let $F_p: \C^n \to \C^n$ be the holomorphic map defined in Lemma $\ref{bochner}$. If $U \ni p$ and $V=F_p(U)$ are open sets in $\C^n$ satisfying
\begin{enumerate}\item $F_p$ is injective in $U$ and
                 \item $\psi_p^z(V)\cap V$ is connected, for each $z \in \C^*$,
\end{enumerate}
then $F_p$ is injective in the saturation  ${\rm Sat}(U,\varphi)$ of $U$ by (the orbits of) $\varphi$.
\end{Lemma}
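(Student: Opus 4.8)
The plan is to exploit the equivariance $\psi_p^z \circ F_p \equiv F_p \circ \varphi^z$ to turn injectivity on the saturation into a single analytic-continuation argument. First I would reduce the problem. Suppose $a,b \in \Sat(U,\varphi)$ satisfy $F_p(a)=F_p(b)$, and write $a=\varphi^{z_1}(u_1)$, $b=\varphi^{z_2}(u_2)$ with $u_1,u_2 \in U$. The conjugacy gives $\psi_p^{z_1}(F_p(u_1)) = \psi_p^{z_2}(F_p(u_2))$, and applying $\psi_p^{1/z_2}$ and setting $w=z_1/z_2$ this reads $\psi_p^w(F_p(u_1)) = F_p(u_2)$. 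Hence it suffices to prove: whenever $q:=F_p(u_1)\in V$ and $\psi_p^w(q)=F_p(u_2)\in V$ for some $w \in \C^*$, one has $\varphi^w(u_1)=u_2$. Indeed that identity yields $a=\varphi^{z_2}(\varphi^w(u_1))=\varphi^{z_2}(u_2)=b$.

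Next, since $F_p$ is injective and holomorphic on $U$ with open image $V$, the classical fact that an injective holomorphic map between domains in $\C^n$ is a biholomorphism onto its image provides a holomorphic inverse $G\colon V \to U$. Consider the open set $\Omega_w := V \cap \psi_p^{-w}(V)$. It is connected because $\psi_p^{-w}$ carries the set $\psi_p^w(V)\cap V$ (connected by hypothesis $(2)$) biholomorphically onto it, and it contains $0=F_p(p)$ because $p \in U$ and $0$ is the fixed point of $\psi_p$. On $\Omega_w$ I would define the two holomorphic maps $\Phi := \varphi^w \circ G$ and $\Psi := G \circ \psi_p^w$. Using the global conjugacy together with $F_p \circ G \equiv \Id_V$, one checks that $F_p \circ \Phi \equiv \psi_p^w \equiv F_p \circ \Psi$ on $\Omega_w$; that is, $\Phi$ and $\Psi$ are two holomorphic lifts of $\psi_p^w|_{\Omega_w}$ through $F_p$.

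I would then establish $\Phi \equiv \Psi$ near $0$: for $v$ close to $0$ the points $G(v)$ and $\psi_p^w(v)$ are close to $p$ and to $0$ respectively, so $\Phi(v)=\varphi^w(G(v))$ and $\Psi(v)=G(\psi_p^w(v))$ both lie in $U$; since $F_p(\Phi(v))=F_p(\Psi(v))=\psi_p^w(v)$, injectivity of $F_p$ on $U$ forces $\Phi(v)=\Psi(v)$ there. Because $\Omega_w$ is connected, the identity principle propagates this coincidence to all of $\Omega_w$. Evaluating at $q \in \Omega_w$ finally gives $\varphi^w(u_1)=\varphi^w(G(q))=G(\psi_p^w(q))=u_2$, which closes the reduction and proves injectivity of $F_p$ on $\Sat(U,\varphi)$.

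The one genuinely delicate step — and the precise place where hypothesis $(2)$ is indispensable — is this propagation. A priori $\Phi$ and $\Psi$ coincide only on the small neighborhood of $0$ on which $\Phi$ is guaranteed to land in $U$; nothing forces $\varphi^w(G(v))$ to remain in $U$ elsewhere, so one cannot invoke injectivity on $U$ globally. Connectedness of $\psi_p^w(V)\cap V$, equivalently of $\Omega_w$, is exactly what lets the identity theorem extend the local agreement of the two lifts to the entire domain. Without it, $\Phi$ and $\Psi$ could disagree on components of $\Omega_w$ not meeting $0$, and the argument would fail.
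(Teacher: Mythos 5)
Your proof is correct, and its crux coincides with the paper's: the conjugacy from Lemma~\ref{bochner} holds near $0$, and hypothesis $(2)$ plus the identity principle propagate it to all of $\psi_p^{-w}(V)\cap V$ --- your identity $\Phi\equiv\Psi$ on $\Omega_w$ is precisely the paper's equation $F_p^{-1}\circ\psi_p^z\equiv\varphi^z\circ F_p^{-1}$ on $\psi_p^{-z}(V)\cap V$, established there by the same two ingredients. Where you genuinely diverge is the endgame. The paper never argues injectivity pointwise: it defines a global map $T\colon \Sat(V,\psi_p)\to\Sat(U,\varphi)$, $y\mapsto \varphi^{z^{-1}}\circ F_p^{-1}\circ\psi_p^z(y)$ for any $z$ with $\psi_p^z(y)\in V$, proves $T$ is well defined by an isotropy-transfer computation (if $h\in\C^*_{x,\psi_p}$ for $x\in \psi_p^{h^{-1}}(V)\cap V$, then $h\in\C^*_{F_p^{-1}(x),\varphi}$ by the extended relation), and then verifies $T$ is a two-sided inverse of $F_p$ on the saturations. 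Your route --- reduce to $a=\varphi^{z_1}(u_1)$, $b=\varphi^{z_2}(u_2)$ with equal images, set $w=z_1z_2^{-1}$, and show the two lifts $\varphi^w\circ G$ and $G\circ\psi_p^w$ of $\psi_p^w$ through $F_p$ agree on the connected set $\Omega_w$ by matching them near $0$ --- is shorter and delivers exactly the stated conclusion with no well-definedness bookkeeping, the isotropy argument being absorbed into the identity-principle step; the paper's construction buys strictly more than the lemma asserts, namely an explicit equivariant biholomorphism between the full saturations extending $(F_p|_U)^{-1}$. Your supporting details all check out: $\Omega_w=\psi_p^{-w}\bigl(\psi_p^w(V)\cap V\bigr)$ is connected and contains $0$ since $0$ is fixed by the linear action; $\Phi$ lands in $U$ near $0$ because $G(0)=p$ is fixed by $\varphi^w$, so injectivity of $F_p$ on $U$ applies there; and $q=F_p(u_1)\in\Omega_w$ because $\psi_p^w(q)=F_p(u_2)\in V$. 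Your closing paragraph also correctly isolates hypothesis $(2)$ as indispensable, for exactly the reason the paper uses it.
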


\begin{proof}[Proof]
The saturation  ${\rm Sat}(U,\varphi)$ of $U$ by $\varphi$ is the union of all orbits of $\varphi$ intersecting $U$.
For each $z \in \C^*$, it follows from Lemma $\ref{bochner}$ that
\begin{equation}\label{equ}F_p^{-1}\circ \psi_p^z \equiv \varphi^z\circ F_p^{-1}\end{equation}
in some open set $W_z \subset \psi_p^{-z}(V)\cap V$. Using $(2)$ and the Identity Principle, we conclude that ($\ref{equ}$) happens in $\psi_p^{-z}(V)\cap V$. We define the map
\begin{eqnarray*}T: {\rm Sat}(V,\psi_p) &\to& \Sat(U,\varphi) \\ y &\mapsto& \varphi^{z^{-1}} \circ F_p^{-1} \circ \psi_p^z(y),
\end{eqnarray*}
where $z\in \C^*$ satisfies $\psi_p^z(y) \in V$. We need to show
that $T$ is well defined. It is obvious if $y \in {\rm
Fix}(\psi_p)$. Take $y \in  {\rm Sat}(V,\psi_p) \menos {\rm
Fix}(\psi_p)$. Suppose that there exist $\tilde{z} \ne z$ such
that $\psi_p^z(y) =x \in V$ and $\psi_p^{\tilde{z}}(y) = \tilde{x}
\in V$. In this case, there is a $l \in \mathbb{C}^*$ such that
$\psi_p^l(x)=\tilde{x}$ and this implies that
$$\psi_p^{l^{-1}} \circ \psi_p^{\tilde{z}} \circ \psi_p^{z^{-1}}(x)=x,$$
that is, $l^{-1} * \tilde{z} * z^{-1} =h \in \C^*_{x,\psi_p}$.
Since $x \in \psi_p^{h^{-1}}(V) \cap V$, it follows from
($\ref{equ}$) that $h \in \C^*_{F_p^{-1}(x),\varphi}$. Therefore,
\begin{eqnarray*}\varphi^{\tilde{z}^{-1}} \circ F_p^{-1} \circ \psi_p^{\tilde{z}}(y) &=& \varphi^{z^{-1}} \circ \varphi^{h^{-1}} \circ \varphi^{l^{-1}}
\circ F_p^{-1} \circ \psi_p^l \circ \psi_p^h \circ \psi_p^z(y) \\ &=&  \varphi^{z^{-1}} \circ \varphi^{h^{-1}} \circ \varphi^{l^{-1}}
\circ F_p^{-1} \circ \overbrace{\psi_p^l \circ  \underbrace{\psi_p^z (y)}_{\in V}}^{\in V} \\  &=& \varphi^{z^{-1}} \circ \varphi^{h^{-1}} \circ  F_p^{-1} \circ  \psi_p^z (y) \\ &=& \varphi^{-z} \circ F_p^{-1} \circ  \psi_p^z  (y).
\end{eqnarray*} This implies that $T$ is well defined and that is holomorphic, since $V$ is open. We define the map
\begin{eqnarray*}L: \Sat(U,\varphi) &\to& \Sat(V,\psi_p) \\ y &\mapsto& \psi_p^{z^{-1}} \circ F_p \circ \varphi^z(y),
\end{eqnarray*}
where $z \in \C^*$ satisfies $\varphi^z(y) \in U$. By item ($i$) of Lemma $\ref{bochner}$ we have that $L \equiv F_p$. On the other hand $T \circ L \equiv Id|_{{\rm Sat}(U,\varphi)}$ and $L \circ T \equiv Id|_{{\rm Sat}(V,\psi_p)}$, that is, $T$ is the inverse of $F_p$.
\end{proof}

Now, we can prove Theorem~A:

\begin{proof}[Proof of Theorem~A]Consider the flow action
\begin{eqnarray*} \phi: \C \times \C^n &\to& \C^n \\  (z,x) &\to& \varphi (\exp(2 \pi \sqrt{-1}z), x).
\end{eqnarray*} Let $X(x)= \sum_{j=1}^n a_j(x) \frac{\partial}{\partial z_j} $ be the complete holomorphic vector field whose flow is $\phi$. If we identify $X$ with the map $X(x)=(a_1(x), \dots,a_n(x))$, then the action
\begin{eqnarray*} \psi: \C \times \C^n &\to& \C^n \\  (z,x) &\to& \psi_p (\exp(2 \pi \sqrt{-1}z), x)\end{eqnarray*}
is the flow of the linear vector field $Y(x)=DX(p)\cdot x$. Since the isotropy of $\psi_p$ at each point contains $\Z$, a Jordan canonical form for $DX(p)$ has no nilpotent part. Therefore, there is a linear isomorphism $A$ and an action $\phi: \C \times \C^n \to \C^n$ such that
$$\phi^z(x) = A^{-1} \circ \psi^z \circ A(x) = \left(\exp(\lambda_1 2 \pi \sqrt{-1} z) x_1, \dots , \exp(\lambda_n 2 \pi \sqrt{-1} z) x_n\right),$$
where $x=(x_1, \dots, x_n)$ and $\lambda_i \in \Z$. Let $B_0$ be an open ball centered at the origin of $\C^n$.
\begin{Claim}
\label{Claim:connected}
 For each $z \in \C$, $\phi^z(B_0) \cap B_0$ is connected.
 \end{Claim}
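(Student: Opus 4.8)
The plan is to exploit convexity, which reduces the claim to an elementary observation. First I would record the explicit form of the map: after conjugation by $A$, the time-$z$ map is the diagonal $\C$-linear automorphism $\phi^z = \mathrm{diag}\bigl(\exp(\la_1 2\pi\sqrt{-1}\,z),\dots,\exp(\la_n 2\pi\sqrt{-1}\,z)\bigr)$ of $\C^n$, all of whose diagonal entries are nonzero. In particular $\phi^z$ is a real-linear isomorphism of $\C^n\cong\R^{2n}$.

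Next I would use that $B_0$, being a ball centered at the origin, is a convex subset of $\R^{2n}$, and that the image of a convex set under a real-linear map is again convex; hence $\phi^z(B_0)$ is convex (an open ellipsoid centered at $0$). The intersection of two convex sets is convex, so $\phi^z(B_0)\cap B_0$ is convex. Moreover it is nonempty: the origin is fixed by $\phi$, so $0=\phi^z(0)\in\phi^z(B_0)\cap B_0$. Finally, a nonempty convex subset of $\R^{2n}$ is connected, since any two of its points are joined by the straight segment between them, which stays inside by convexity. This establishes the claim.

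I do not anticipate any real obstacle: the only points needing care are that $\phi^z$ preserves convexity (automatic, as it is real-linear) and that the two sets share the common point $0$. Note that the argument uses neither the integrality $\la_i\in\Z$ nor the specific value of $z$, so it holds verbatim for every $z\in\C$; this is precisely what is required in order to verify hypothesis $(2)$ of Lemma~\ref{lema2} with $V=B_0$.
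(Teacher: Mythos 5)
Your proof is correct, and it takes a genuinely different (and in fact cleaner) route than the paper. The paper argues star-shapedness: given $y\in\phi^z(B_0)\cap B_0$ with $y=\phi^z(y_1)$, it considers the straight segment $r_{y_1}$ from $y_1$ to the origin and uses the \emph{diagonal} form of $\phi^z$ to check, coordinate by coordinate, that each complex coordinate of a point of $r_{y_1}$ has modulus at most that of the corresponding coordinate of $y_1$, hence the same holds after applying $\phi^z$, so $\phi^z(r_{y_1})\subset B_0$; every point of the intersection is thus joined to $0$ inside $\phi^z(B_0)\cap B_0$. Your argument replaces this coordinatewise estimate by pure convexity: $\phi^z$ is a real-linear automorphism of $\C^n\cong\R^{2n}$ fixing $0$, so $\phi^z(B_0)$ is convex, the intersection of two convex sets containing $0$ is nonempty and convex, hence connected. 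This is strictly more general --- it needs neither the diagonalization by $A$ nor the explicit exponential coordinates, and would apply verbatim to any linear flow, even one with a nilpotent part --- whereas the paper's computation only establishes the weaker star-shaped property (which of course suffices). The one thing the paper's coordinatewise formulation buys is that it runs in the same explicit coordinates $(\ref{equ1})$ used throughout the proof of Theorem~A, but your observation that only linearity of $\phi^z$ and the common fixed point $0$ matter is the conceptually right reduction, and your final remark correctly identifies that the claim is exactly what feeds hypothesis $(2)$ of Lemma~\ref{lema2}.
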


\begin{proof} In fact, take $z = a-bi \in \C$ and $y \in \phi^z(B_0) \cap B_0$. There exists $y_1 \in B_0$ such that $\phi^z(y_1)=y$. Let $r_{y_1}$ be the straight line segment joining $y_1$ and the origin. It is enough to show that $l_y:=\phi^z(r_{y_1}) \subset B_0$. So take $r \in r_{y_1}$. Each real coordinate of $r$ has modulus less than the modulus of respective coordinate of $y_1$. So the same is valid for their complex coordinates. Since the $j$-th complex coordinate of $\phi(z,x)$ is
\begin{equation}\label{equ1}\exp\left(\lambda_j 2 \pi b \right) \cdot \exp\left(\lambda_j 2 \pi \sqrt{-1} a \right)x_j,\end{equation}
we conclude that each complex coordinate of $\phi^z(r)$ has modulus less than the modulus of respective coordinate of $\phi^z(y_1)=y \in B_0$. Therefore, $\phi^z(r) \in B_0$ and Claim~\ref{Claim:connected} is proved.
\end{proof}

Let $F_p: \C^n \to \C^n$ be the map obtained in Lemma $\ref{bochner}$. Let $N \ni p$ be an open set in $\C^n$ such that $F_p|_N: N \to F_p(N)$
is a biholomorphism. Take an open ball $B_0$ centered at origin in $\C^n$ such that $V := A(B_0) \subset F_p(N)$. We have that
$$\psi^{z} (V) \cap V = A \left[A^{-1} \circ \psi^z \circ A(B_0) \cap B_0\right]= A \left[\phi^z(B_0) \cap B_0\right].$$
Therefore, it follows from above claim that  $\psi^{z} (V) \cap V$ is connected for each $z \in \C$. If we take $U:= (F_p|_N)^{-1}(V)$, then the desired result follows from Lemma $\ref{lema2}$.
\end{proof}

\section{Actions of $\bc^*$ on complex manifolds}
Now we search for results similar to Theorem~A for complex manifolds. With this aim
the following definition is given:

\begin{defi}\label{def1}\rm{Let $(\varphi,G,M,p)$ and $(\psi,G,N,q)$ be holomorphic actions on complex manifolds $M$ and $N$. Let $U \ni p$ and $V \ni q$ connected open sets in $M$ and $N$ respectively and let $F\colon U \to V$ be a biholomorphism such that $F(p)=q$. We write
$$\varphi \sim_{pq} \psi \textnormal{ by } F\colon U \to V$$
if, for each $g \in G$, there exists an open set $W_g \subset \left(U \cap \varphi ^{g^{-1}}(U)\right)$ such that
$$\psi^g \circ F \equiv F \circ \varphi ^g \textnormal{ in } W_g.$$
Occasionally, we will use the notation $\varphi \sim_{pq} \psi$ without mentioning the map $F$.}
\end{defi}

\begin{obs}\rm{It is easy to see that $\sim$ is symmetric. More precisely, if $\varphi \sim_{pq} \psi$ by $F : U \to V$, then $\psi \sim_{qp} \varphi$ by $F^{-1}:V \to U$.}
\end{obs}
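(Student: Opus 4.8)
The plan is to prove symmetry by directly inverting, one group element at a time, the intertwining identity supplied by the hypothesis. Assume $\varphi \sim_{pq} \psi$ by $F\colon U \to V$ and fix $g \in G$. Let $W_g \subset \left(U \cap \varphi^{g^{-1}}(U)\right)$ be the open set furnished by Definition~\ref{def1}, on which $\psi^g \circ F \equiv F \circ \varphi^g$. My candidate for the corresponding set in the definition of $\psi \sim_{qp} \varphi$ is $\widetilde{W}_g := F(W_g)$, which is open since $F$ is a biholomorphism. For $y \in \widetilde{W}_g$, write $y = F(x)$ with $x \in W_g$; applying $F^{-1}$ to the identity $\psi^g(F(x)) = F(\varphi^g(x))$ yields $F^{-1}\circ \psi^g(y) = \varphi^g \circ F^{-1}(y)$, which is precisely the required relation $\varphi^g \circ F^{-1} \equiv F^{-1} \circ \psi^g$ on $\widetilde{W}_g$. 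For this manipulation to make sense I must know that $\psi^g(y) \in V$, i.e. that $\widetilde{W}_g \subset \psi^{g^{-1}}(V)$, so that $F^{-1}\circ \psi^g$ is actually defined on $\widetilde{W}_g$; this is the one point I verify separately below. Note also that $F^{-1}(q)=p$ is immediate from $F(p)=q$, and that $V$, $U$ are connected by hypothesis, so all the structural data demanded by Definition~\ref{def1} for the reversed relation are already in place.

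The only step requiring care is the domain bookkeeping, namely that $\widetilde{W}_g$ lies inside the prescribed set $V \cap \psi^{g^{-1}}(V)$, and this is also the only place where the group-action structure is used. The inclusion $\widetilde{W}_g = F(W_g) \subset F(U) = V$ is immediate from $W_g \subset U$. For the second inclusion I would argue that, since $W_g \subset \varphi^{g^{-1}}(U)$ and $\varphi^g \circ \varphi^{g^{-1}} = \mathrm{Id}$, we have $\varphi^g(W_g) \subset U$; then the equivariance of $F$ on $W_g$ gives $\psi^g(\widetilde{W}_g) = \psi^g(F(W_g)) = F(\varphi^g(W_g)) \subset F(U) = V$, whence $\widetilde{W}_g \subset \psi^{g^{-1}}(V)$ because $\psi^g \circ \psi^{g^{-1}} = \mathrm{Id}$. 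This uses nothing beyond the fact that $\varphi$ and $\psi$ are genuine actions, so that $\varphi^g,\varphi^{g^{-1}}$ and $\psi^g,\psi^{g^{-1}}$ are mutually inverse biholomorphisms.

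I do not expect any genuine obstacle: the assertion is a formal consequence of the definition, and the proof is complete once the displayed identity is established for every $g \in G$. The subtle point worth flagging is exactly the one above, namely that $F(W_g) \subset \psi^{g^{-1}}(V)$ follows not from $F(U)=V$ alone but from the equivariance of $F$ on $W_g$ combined with $W_g \subset \varphi^{g^{-1}}(U)$. Since each $g$ is treated independently, no compatibility among the various $\widetilde{W}_g$ is required, and the connectedness of $U$ and $V$ plays no role in this particular verification beyond ensuring that the hypotheses of Definition~\ref{def1} remain literally satisfied after exchanging the two actions.
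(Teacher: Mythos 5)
Your proof is correct and coincides with the argument the paper leaves implicit (the remark is stated without proof as ``easy to see''): conjugating the identity $\psi^g\circ F\equiv F\circ\varphi^g$ by $F$ on $\widetilde{W}_g:=F(W_g)$, with the inclusion $F(W_g)\subset V\cap\psi^{g^{-1}}(V)$ supplied exactly as you do by $W_g\subset\varphi^{g^{-1}}(U)$ together with the equivariance of $F$ on $W_g$, is the intended verification. Your flagging of the domain bookkeeping as the only nontrivial point is accurate, and nothing further is needed.
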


\begin{obs}\label{loc}\rm{Suppose that $\varphi \sim_{pq} \psi$ by $F\colon  U \to V$. Let $D \ni p$ be an connected open set such that $\overline{D} \subset U$. Since $p$ is a fixed point, there exists an open set $W \ni e$ (the neutral element) in $G$, such that
$\varphi ^h(D) \subset U$ when $h \in W$. So by Identity Principle, we have that, for each $h \in W$,
$$\psi^h \circ F \equiv F \circ \varphi ^h \textnormal{ in }D.$$
In addiction, it also follows from Identity Principle that if $G$ is connected, then $F\left(\mbox{Fix}(\varphi,D)\right) \subset \mbox{Fix}(\psi,F(D))$. So using the symmetry of $\sim$, we conclude that $F\left(\mbox{Fix}(\varphi,D)\right)=\mbox{Fix}(\psi,F(D))$}\end{obs}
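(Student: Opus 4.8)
The plan is to establish the two assertions of the Remark in turn, each time combining the Identity Principle with the structure of the group $G$. Throughout I write $\Phi\colon G\times M\to M$, $\Phi(g,x)=\varphi^g(x)$, for the (jointly continuous) action.

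For the first assertion I would begin with the domain bookkeeping. Since $\varphi^e=\Id$ we have $\Phi(\{e\}\times\overline D)=\overline D\subset U$; as $\overline D$ is compact and $U$ open, the tube lemma produces a connected open neighbourhood $W\ni e$ in $G$ with $\Phi(W\times\overline D)\subset U$, so that $\varphi^h(D)\subset U$, equivalently $D\subset U\cap\varphi^{h^{-1}}(U)$, for every $h\in W$. For such $h$ both composites $\psi^h\circ F$ and $F\circ\varphi^h$ are then defined and holomorphic on the connected open set $D$, using $\varphi^h(D)\subset U=\Dom F$ and $F(D)\subset V\subset\Dom\psi^h$. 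By the definition of $\sim_{pq}$ they coincide on the open set $W_h$, and I would invoke the Identity Principle for holomorphic maps to upgrade this to coincidence on all of $D$, giving $\psi^h\circ F\equiv F\circ\varphi^h$ on $D$ for every $h\in W$.

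For the second assertion, assume $G$ connected and fix $x_0\in\Fix(\varphi,D)$, so $\varphi^g(x_0)=x_0$ for all $g$. The first assertion then gives $\psi^h(F(x_0))=F(\varphi^h(x_0))=F(x_0)$ for every $h\in W$, so the $\psi$-isotropy group of $F(x_0)$, being a subgroup of $G$, contains the neighbourhood $W$ of $e$; since a neighbourhood of the identity generates a connected group, this isotropy group is all of $G$, whence $F(x_0)\in\Fix(\psi)$ and, as $F(x_0)\in F(D)$, we obtain $F(\Fix(\varphi,D))\subset\Fix(\psi,F(D))$. For the reverse inclusion I would use the symmetry recorded in the previous remark: $\psi\sim_{qp}\varphi$ by $F^{-1}\colon V\to U$. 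Applying the inclusion just proved to this pair with $D$ replaced by $D':=F(D)$ — which is connected open with $\overline{D'}=F(\overline D)\subset V$ compact — yields $F^{-1}(\Fix(\psi,F(D)))\subset\Fix(\varphi,D)$, and combining the two inclusions gives the desired equality.

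The delicate point, and the step I would scrutinise most, is the Identity-Principle extension in the first assertion: the definition of $\sim_{pq}$ supplies the conjugacy only on some a priori uncontrolled open set $W_h$, so one must ensure that $W_h$ lies in the connected component of $U\cap\varphi^{h^{-1}}(U)$ containing $D$ (equivalently, that $W_h\cap D$ is a nonempty open set) before the Identity Principle can be applied. I expect to dispose of this by exploiting the base point: at $h=e$ the relation is the trivial identity $\Id\circ F=F\circ\Id$, valid on all of $U\supset D$, and a joint-continuity argument in $(h,x)$ near $(e,p)$ lets one take $W_h$ to be a neighbourhood of $p$ for $h$ close to $e$, so that $W_h\cap D$ is automatically a nonempty open subset of the connected set $D$. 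The auxiliary facts used — compactness of $\overline D$ for the uniform $W$, and that a neighbourhood of $e$ generates a connected Lie group — are standard.
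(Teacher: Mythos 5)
Your outline coincides with the paper's own in-line argument: a tube-lemma neighbourhood $W\ni e$ obtained from $\overline D\subset U$ (the paper tacitly takes $D$ relatively compact, as you do; elsewhere it writes $D\subset\subset U$), the Identity Principle to propagate the conjugacy identity to $D$ for $h\in W$, and the symmetry of $\sim_{pq}$ applied to $D'=F(D)$ for the reverse inclusion of fixed-point sets. Your derivation of $F\left(\Fix(\varphi,D)\right)\subset\Fix(\psi,F(D))$ — the $\psi$-isotropy of $F(x_0)$ contains $W$, and a subgroup containing a neighbourhood of $e$ in a connected group is the whole group — is a harmless group-topological variant of the paper's, which instead applies the Identity Principle to the holomorphic orbit map $g\mapsto \psi^g(F(x_0))$, constant on the open set $W$, hence constant on all of the connected group $G$. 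Both are standard and correct.

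The step that does not survive scrutiny is the repair you propose for the delicate point you rightly flag. You cannot ``take $W_h$ to be a neighbourhood of $p$ for $h$ close to $e$'' by joint continuity: coincidence of $\psi^h\circ F$ and $F\circ\varphi^h$ is a closed condition in $h$, not an open one, so exact equality at $h=e$ together with continuity in $(h,x)$ yields only approximate equality for nearby $h$, which the Identity Principle cannot exploit. The resolution is definitional rather than analytic: in every instance where the relation $\sim_{pq}$ is actually produced in the paper (see the proof of Lemma \ref{lli}, in particular the subgroup $S$ of parameters for which the conjugacy holds \emph{in some neighbourhood of} $p$), the identity is established on a neighbourhood of $p$ for each group element, so the intended reading of Definition \ref{def1} is that $p\in W_g$. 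With that reading the argument closes at once: for $h\in W$ both $\psi^h\circ F$ and $F\circ\varphi^h$ are holomorphic on $D$ (since $\varphi^h(D)\subset U$ and $F(D)\subset V$), they agree on the nonempty open subset $W_h\cap D$ of the connected set $D$ (both contain $p$), and the Identity Principle is applied on $D$ itself — no connectivity of $U\cap\varphi^{h^{-1}}(U)$ and no limiting argument in $h$ is needed. Modulo replacing your continuity patch by this observation, your proof is the paper's.
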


\begin{defi}\rm{Let $(\varphi,G,M,p)$ be a holomorphic  action with a fixed point at $p$ and let $(\psi_{p}, G,\C^n,0)$ be the action by linear transformations defined by the derivative of $\varphi$ at $p$. If $\varphi \sim_{p0} \psi_p$, we say that $\varphi$ is {\it locally linearizable at $p$.}}
\end{defi}

A very preliminary result towards Theorem~B is the following:

\begin{Lemma}\label{lli}Every holomorphic action $(\varphi,\C^*,M,p)$  with a fixed point at $p$ is locally linearizable at $p$.
\end{Lemma}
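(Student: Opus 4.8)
The plan is to transplant the construction of Lemma~\ref{bochner} into a local chart, where only the local behaviour near $p$ is used. First I would fix a holomorphic chart $\xi\colon W\to\C^n$ with $\xi(p)=0$ and identify $W$ with its image, so that $p=0$ and $\psi_p$ becomes the linear action $g\mapsto D\varphi^g(0)$. Because $\varphi$ is a globally defined $\C^*$-action, the associated flow $\phi^z(x)=\varphi(\exp(2\pi\sqrt{-1}z),x)$ is a globally defined holomorphic flow on $M$, periodic of period $1$ in $z$ (since $\phi^1\equiv\Id$); hence every value $\phi^s$ at real time is obtained from times in $[0,1]$. As $p$ is fixed and $[0,1]$ is compact, continuity of $(s,x)\mapsto\phi^s(x)$ furnishes a relatively compact neighborhood $U_0\ni p$ with $\phi^s(U_0)\subset W$ for all $s\in[0,1]$, which is exactly what is needed to keep the integral below inside the chart.

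Next I would define on $U_0$, in the chart coordinates, the same map
\[
F_p(x)=\int_0^1 D\phi^{-s}(p)\cdot\bigl[\phi^s(x)-p\bigr]\,ds .
\]
It is holomorphic, and the two normalizations are immediate local computations at $p$: $F_p(p)=0$ because $\phi^s(p)=p$, while $DF_p(p)=\int_0^1 D\phi^{-s}(p)\,D\phi^s(p)\,ds=\int_0^1\Id\,ds=\Id$. By the inverse function theorem, after shrinking $U_0$ to a connected neighborhood $U$ one obtains a biholomorphism $F_p\colon U\to V:=F_p(U)$ with $0\in V$, which supplies the map required by Definition~\ref{def1}.

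I would then reproduce the integral manipulations of the proof of Lemma~\ref{bochner} to get $D\phi^t(p)\cdot F_p\equiv F_p\circ\phi^t$ for real $t\in[0,1]$ on a neighborhood of $p$; the periodicity $\phi^{z+1}=\phi^z$ together with the choice of $U_0$ guarantees that every intermediate flow point occurring in the computation stays in $W$, so the change-of-variable steps are legitimate. For $x$ near $p$ the map $z\mapsto\bigl[D\phi^z(p)\cdot F_p-F_p\circ\phi^z\bigr](x)$ is holomorphic on the open set $\{z:\phi^z(x)\in U\}$, whose component containing $[0,1]$ carries the vanishing just established; by the Identity Principle it vanishes on that whole component, giving $\psi_p^g\circ F_p\equiv F_p\circ\varphi^g$ for all $g$ in a neighborhood of $1\in\C^*$, on a neighborhood of $p$. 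To reach every $g\in\C^*$ in the weak, open-set sense of $\sim_{p0}$, I would propagate the identity by the cocycle relation: if it holds for $g_1$ and $g_2$ near $p$, then $\psi_p^{g_1g_2}\circ F_p=\psi_p^{g_1}\circ(\psi_p^{g_2}\circ F_p)=\psi_p^{g_1}\circ F_p\circ\varphi^{g_2}=F_p\circ\varphi^{g_1g_2}$ on a possibly smaller neighborhood (using that $\varphi^{g_2}(x)$ is close to $p$ when $x$ is). Since $\C^*$ is connected, any neighborhood of $1$ generates it, so for each $g$ one reaches an open $W_g\subset U\cap\varphi^{g^{-1}}(U)$ on which $\psi_p^g\circ F_p\equiv F_p\circ\varphi^g$; this is precisely $\varphi\sim_{p0}\psi_p$.

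The main obstacle I expect is the bookkeeping of domains: one must make sure the Bochner integral and every intermediate term of the transplanted computation remain inside the chart $W$ (which periodicity plus compactness of $[0,1]$ resolves), and then upgrade the semi-conjugacy from group elements near $1$ to all of $\C^*$ in the germ sense of Definition~\ref{def1}, rather than as a single global identity on $\C^n$ as in Lemma~\ref{bochner}.
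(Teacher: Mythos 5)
Your proposal is correct and follows essentially the same route as the paper's proof: the Bochner-type averaging map $\int_0^1\psi_p^{-s}\circ\xi\circ\phi^s(x)\,ds$ transplanted into a chart (with compactness of $[0,1]$ and the fixed point guaranteeing the flow stays in the chart), $DF(p)$ invertible to shrink to a biholomorphism, the Identity Principle in the time variable to pass from $t\in[0,1]$ to a complex neighborhood, and then extension to all of $\C^*$ by a generation argument. Your cocycle-propagation step in $\C^*$ is exactly the paper's observation that $S=\{z\in\C:\ \psi_p^z\circ F\equiv F\circ\varphi^z\text{ near }p\}$ is a subgroup of $\C$ containing an open set, hence all of $\C$.
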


\begin{proof} Consider the action
\begin{eqnarray*} \phi: \C \times M &\to& M \\  (z,x) &\to& \varphi (\exp(2 \pi \sqrt{-1}z), x).
\end{eqnarray*}
Let $(\xi,W)$ be a local chart at $p$ and let $(\psi_p,\C^*,\C^n,0)$
be the action by linear transformations defined by the derivative
of $\varphi$ at $p$, constructed from $\xi$. Since $p$ is a fixed
point, there exists an open set $U \subset W$ such that $\varphi ^t(U)
\subset W$ for all $t \in [0,1]$. We define the map
\begin{eqnarray*}F=F_{p,\xi}: U &\to& \C^n \\ x &\mapsto& \int_0^1 \psi_p^{-t} \circ \xi \circ \varphi ^s(x) ds.\end{eqnarray*}
Fixed $t \in [0,1]$, we can repeat the argument in Lemma
$\ref{bochner}$ to obtain
\begin{equation}\label{igual}\psi_p^t \circ F \equiv F \circ \varphi_p^t {\rm \ em \  } \varphi ^{-t}(U)\cap U.\end{equation}
Since $DF(p)=D\xi(p)$, we can assume that $F$ is injective,
reducing $U$ if necessary. Take a connected open set $A \ni 0$ in
$\C$ and an open set $B \ni p$ in $M$ such that $\varphi ^z(x) \in U$
whenever $x \in B$, $z \in A$. Take $x \in B$ and define the
holomorphic map
\begin{eqnarray*}R_x: A \subset \C &\to \C^n \\ z &\mapsto& \left[\psi_p^z \circ F - F \circ \varphi ^z\right](x).\end{eqnarray*}
By ($\ref{igual}$), $R_x(z)=0$ whenever $z \in [0,1]\cap A$.
Therefore $R_x \equiv 0 \textnormal{ for all }x \in B$, that is,
\begin{equation}\label{igu2} \psi_p^z \circ F (x) = F \circ \varphi ^z (x) \textnormal{ whenever } x\in B, \ z \in A \ni 0.\end{equation}
The set
$$S=\{z \in \C ; \ \psi_p^z \circ F \equiv F \circ \varphi ^z \textnormal{in some neighborhood of }p \textnormal{ in }M\}$$
is a subgroup of $\C$, so by ($\ref{igu2}$), $A \subset S$. This
implies that $S=\C$ and the proof is complete.
\end{proof}

\begin{defi}\label{forte}\rm{Let
\begin{itemize}\item $(\varphi,G,M,p)$ and $(\psi,G,N,q)$ be holomorphic actions;
               \item $U \ni p$ and $V \ni q$ be open sets in $M$ and $N^2$ respectively;
               \item $D \ni p$ be an open set in $M$ such that $\overline{D} \subset U$;
               \item $F\colon U \to V$ be a biholomorphism such that $F(p)=q$.
\end{itemize}
We write
$$\varphi \simeq_{pq} \psi \textnormal{ by } F_D : U \to V$$
if, for each $g \in G$, some of the following alternatives occurs:
\begin{enumerate} \item $\varphi ^g(D) \subseteq U$ and $\psi^g \circ F \equiv F \circ \varphi ^g$ in $D$,
                  \item $\varphi ^g(U) \supseteq U$ and $\psi^g \circ F \equiv F \circ \varphi ^g$ in $\varphi ^{g^{-1}}(U)$.
\end{enumerate}
Occasionally, we will use the notation $\varphi \simeq_{pq} \psi$ without mentioning the application and the open sets.}
\end{defi}

\begin{Lemma}\label{ref} $\varphi \simeq_{pq} \psi$ implies that $\varphi \sim_{pq} \psi$. Moreover, the relation $\simeq_{pq}$ is symmetric: if $\varphi \simeq_{pq} \psi$ by $F_D : U \to V$, then $\psi \simeq_{qp} \varphi$ by $F^{-1}_E:V \to U$, where $E=F(D)$.\end{Lemma}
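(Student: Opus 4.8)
The plan is to verify the two assertions separately, in each case reducing everything to the two alternatives of Definition \ref{forte} and to the elementary bookkeeping of how the relevant sets transform under the biholomorphism $F$.

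For the implication $\varphi \simeq_{pq} \psi \Rightarrow \varphi \sim_{pq} \psi$, I would fix $g \in G$ and exhibit the open set $W_g \subseteq U \cap \varphi^{g^{-1}}(U)$ required by Definition \ref{def1} directly from whichever alternative holds. If alternative (1) holds, then $\varphi^g(D) \subseteq U$ gives $D \subseteq \varphi^{g^{-1}}(U)$, and since $\overline D \subset U$ we also have $D \subseteq U$; thus $W_g := D$ lies in $U \cap \varphi^{g^{-1}}(U)$ and already carries the intertwining $\psi^g \circ F \equiv F \circ \varphi^g$. If alternative (2) holds, then $\varphi^g(U) \supseteq U$ gives $\varphi^{g^{-1}}(U) \subseteq U$, so $U \cap \varphi^{g^{-1}}(U) = \varphi^{g^{-1}}(U)$, and $W_g := \varphi^{g^{-1}}(U)$ works by hypothesis. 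In both cases $p \in W_g$, so $W_g$ is a nonempty open set and the defining condition of $\sim_{pq}$ is met.

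For the symmetry I would first check that the datum $F^{-1}_E : V \to U$ is admissible for $\simeq_{qp}$: we have $q = F(p) \in F(D) = E$, the map $F^{-1}$ is a biholomorphism with $F^{-1}(q) = p$, and $\overline E \subset V$ since $F$ is a biholomorphism of $U$ onto $V$ and $\overline D \subset U$. The heart of the argument is then to show that, for each $g \in G$, alternative (1) (resp. (2)) of $\varphi \simeq_{pq} \psi$ for the element $g$ forces alternative (1) (resp. (2)) of $\psi \simeq_{qp} \varphi$ for the same $g$. The intertwining transforms uniformly: the relation $\psi^g \circ F \equiv F \circ \varphi^g$ on a set $S$ is, by applying $F^{-1}$ on both sides, equivalent to $\varphi^g \circ F^{-1} \equiv F^{-1} \circ \psi^g$ on $F(S)$. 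For the set conditions, in alternative (1) I would compute $\psi^g(E) = \psi^g(F(D)) = F(\varphi^g(D)) \subseteq F(U) = V$ using the intertwining on $D$, which gives the condition $\psi^g(E) \subseteq V$ together with the transported identity on $E = F(D)$. In alternative (2), since $\varphi^g$ maps $\varphi^{g^{-1}}(U)$ onto $U$, the intertwining shows that $\psi^g$ carries $F(\varphi^{g^{-1}}(U))$ bijectively onto $F(U) = V$; by injectivity of $\psi^g$ this yields the exact set identity $\psi^{g^{-1}}(V) = F(\varphi^{g^{-1}}(U)) \subseteq V$, i.e. $\psi^g(V) \supseteq V$, and the transported identity then holds precisely on $\psi^{g^{-1}}(V)$, as alternative (2) of $\simeq_{qp}$ requires.

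The step I expect to be the main obstacle is this last set computation: one must verify not merely an inclusion but the exact equality $\psi^{g^{-1}}(V) = F(\varphi^{g^{-1}}(U))$, so that the intertwining $\varphi^g \circ F^{-1} \equiv F^{-1} \circ \psi^g$ is available on the whole of $\psi^{g^{-1}}(V)$ and not on a proper subset — otherwise alternative (2) of $\simeq_{qp}$ would genuinely fail. This is exactly where one uses $\varphi^g(\varphi^{g^{-1}}(U)) = U$ together with the fact that $\psi^g$ is a biholomorphism. The verification $\overline E \subset V$ is routine (and immediate when $\overline D$ is relatively compact) but should be recorded, since it is part of what makes $F^{-1}_E$ a legitimate datum for $\simeq_{qp}$.
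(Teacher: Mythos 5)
Your verification is correct and is exactly the ``easy verification'' the paper leaves implicit: for each $g$, alternative (1) of $\varphi\simeq_{pq}\psi$ transports to alternative (1) of $\psi\simeq_{qp}\varphi$ via $\psi^g(E)=F(\varphi^g(D))\subseteq V$, and alternative (2) transports to alternative (2) via the exact identity $\psi^{g^{-1}}(V)=F\bigl(\varphi^{g^{-1}}(U)\bigr)$, which you rightly isolate as the only nontrivial point and correctly settle using surjectivity of $\varphi^g\colon\varphi^{g^{-1}}(U)\to U$ and injectivity of $\psi^g$. Your side remark about $\overline{E}\subset V$ is also well taken: this genuinely requires $\overline{D}$ compact (not just $\overline{D}\subset U$), which is consistent with the paper's actual usage, where $D\subset\subset U$ in Lemma~\ref{comp} and $E\subset\subset V$ in the proof of Theorem~B, so on this point you are more careful than the paper's one-line proof.
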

\begin{proof}[Proof]The proof is an easy verification.\end{proof}

\begin{Lemma}\label{comp} Let $(\varphi,G,M,p)$ and $(\psi,G,N,q)$ be holomorphic actions such that $\varphi \sim_{pq} \psi$ by $F\colon U \to V$. Suppose that there exists a connected open set $D \subset \subset U$ such that, for each $g \in G$, some of the following alternatives occurs:
\begin{enumerate} \item $\varphi ^g(D) \subseteq U$,
                  \item $\varphi ^g(U) \supseteq U$.
\end{enumerate}
Then, $\varphi \simeq_{pq} \psi$ by $F_D: U \to V$.
\end{Lemma}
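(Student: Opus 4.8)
The plan is to fix an arbitrary $g \in G$ and, according to which of the two geometric alternatives the hypothesis provides for this $g$, to upgrade the coincidence of $\psi^g \circ F$ and $F \circ \varphi^g$ from the open set $W_g$ furnished by $\varphi \sim_{pq} \psi$ to one of the two full domains demanded by Definition~\ref{forte}. Recall that $U$ is connected (Definition~\ref{def1}), that $D$ is connected with $p \in D$ and $\overline{D} \subset U$, and that $\psi^g \colon N \to N$ is a biholomorphism; consequently $\psi^g \circ F$ is holomorphic on all of $U$, while $F \circ \varphi^g$ is holomorphic on $\varphi^{g^{-1}}(U)$, the two maps being defined and agreeing on the open set $W_g \subseteq U \cap \varphi^{g^{-1}}(U)$.

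Consider first the alternative $\varphi^g(U) \supseteq U$, equivalently $\varphi^{g^{-1}}(U) \subseteq U$. Then both maps are holomorphic on $\varphi^{g^{-1}}(U)$, which is connected, being the biholomorphic image of the connected set $U$, and which contains the nonempty open set $W_g$. The Identity Principle then forces $\psi^g \circ F \equiv F \circ \varphi^g$ on all of $\varphi^{g^{-1}}(U)$, which is precisely alternative $(2)$ of Definition~\ref{forte}. Note that this case needs no information about the location of $W_g$.

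Consider next the alternative $\varphi^g(D) \subseteq U$, that is $D \subseteq \varphi^{g^{-1}}(U)$. Since also $D \subseteq U$, both $\psi^g \circ F$ and $F \circ \varphi^g$ are holomorphic on the connected set $D$, and it remains only to see that they agree on a nonempty open subset of $D$. For this I would use that the coincidence is in fact localized at the fixed point: since $\varphi^{g^{-1}}(p) = p$ we have $p \in U \cap \varphi^{g^{-1}}(U)$, and by Remark~\ref{loc} together with the observation that the set of $g \in G$ for which $\psi^g \circ F \equiv F \circ \varphi^g$ holds on a neighborhood of $p$ is a subgroup of $G$ containing a neighborhood of the identity, hence the identity component of $G$ (all of $G$ when, as for $\C^*$, $G$ is connected), one may take $W_g$ to contain $p \in D$. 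The Identity Principle then extends the coincidence from $W_g \cap D$ to all of $D$, giving alternative $(1)$. Since the hypothesis supplies one of the two alternatives for each $g \in G$, this yields $\varphi \simeq_{pq} \psi$ by $F_D \colon U \to V$.

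The only genuinely delicate point, and the one I would treat most carefully, is this last localization. The bare Definition~\ref{def1} places $W_g$ somewhere in $U \cap \varphi^{g^{-1}}(U)$, a set that need not be connected, so a priori $W_g$ could miss the component of $D$ and the Identity Principle could not be applied in the first case. The subgroup argument above, modelled on the proof of Lemma~\ref{lli} where the corresponding set of times is shown to be a subgroup of $\C$ equal to all of $\C$, is what guarantees that the conjugacy really holds near $p$ for every $g$, and hence that $W_g$ meets $D$. Once this is in hand, both cases reduce to immediate applications of analytic continuation.
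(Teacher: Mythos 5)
Your proposal is correct and takes essentially the same route as the paper, whose entire proof is the one-line assertion that the lemma is ``an immediate consequence of the Identity Principle'': in alternative (2) you apply it on the connected set $\varphi^{g^{-1}}(U)\subseteq U$, and in alternative (1) on the connected set $D$. The one point you elaborate beyond the paper --- the subgroup argument guaranteeing that for every $g$ the conjugacy $\psi^g\circ F\equiv F\circ\varphi^g$ already holds on a neighborhood of $p$, so that $W_g$ can be taken to meet $D$ --- is precisely the detail the paper leaves implicit (it is the content of Remark~\ref{loc} and of the set $S$ in the proof of Lemma~\ref{lli}), and you are right both that without it alternative (1) could fail when $U\cap\varphi^{g^{-1}}(U)$ is disconnected and that it uses connectedness of $G$, which holds in the paper's application $G=\C^*$.
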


\begin{proof}[Proof]This is an immediate consequence of the Identity Principle.\end{proof}

\begin{Lemma}\label{sat}If $(\varphi,G,M,p)$ and $(\psi,G,N,q)$ are holomorphic actions such that $\varphi \simeq_{pq} \psi$ by $F_D: U \to V$ then there is a biholomorphism $T: \Sat(D,\varphi) \to \Sat(F(D),\psi)$ that extends $F|_D$ and satisfies
$$\psi \circ T \equiv T \circ \varphi \textnormal{ in }\Sat(D,\varphi).$$
\end{Lemma}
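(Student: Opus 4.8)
The plan is to extend $F|_D$ along the orbits by transporting with the group action. Since each $\varphi^g$ is a biholomorphism, the saturation $\Sat(D,\varphi)=\bigcup_{g\in G}\varphi^g(D)$ is open, and every $y\in\Sat(D,\varphi)$ satisfies $\varphi^g(y)\in D$ for some $g\in G$. I would define
$$T(y)=\psi^{g^{-1}}\circ F\circ\varphi^g(y),\qquad \varphi^g(y)\in D,$$
mimicking the map $T$ built in the proof of Lemma~\ref{lema2}. Once $T$ is shown to be well defined, holomorphicity is immediate: near a given $y_0$ one fixes a single $g$ with $\varphi^g(y_0)\in D$, and since $D$ is open the same $g$ works on a whole neighborhood, where $T$ coincides with the holomorphic map $\psi^{g^{-1}}\circ F\circ\varphi^g$. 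Taking $g=e$ shows $T$ extends $F|_D$, and by construction $T$ takes values in $\Sat(F(D),\psi)$.

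The crux is well-definedness, and this is exactly what the stronger relation $\simeq_{pq}$ of Definition~\ref{forte} was designed to supply. Suppose $\varphi^{g_1}(y)=x_1\in D$ and $\varphi^{g_2}(y)=x_2\in D$, and put $h=g_2g_1^{-1}$, so that $\varphi^h(x_1)=x_2$ with $x_1,x_2\in D\subset U$. Equality of the two candidate values for $T(y)$ reduces, after cancelling the group factors, to the single identity $\psi^h\circ F(x_1)=F\circ\varphi^h(x_1)$. Now I invoke $\varphi\simeq_{pq}\psi$: for the element $h$ one of the two alternatives holds. In case $(1)$ one has $\psi^h\circ F\equiv F\circ\varphi^h$ on all of $D\ni x_1$; in case $(2)$ the same identity holds on $\varphi^{h^{-1}}(U)$, and $x_1\in\varphi^{h^{-1}}(U)$ because $\varphi^h(x_1)=x_2\in U$. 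Either way the required identity holds at $x_1$, so $T$ is well defined. The same bookkeeping then yields the equivariance $\psi^a\circ T\equiv T\circ\varphi^a$ for every $a\in G$, since $\varphi^a(y)$ again lies in $\Sat(D,\varphi)$ and choosing the group element $ga^{-1}$ for it produces precisely $\psi^a\circ T(y)$.

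Finally, to see that $T$ is a biholomorphism I would exploit the symmetry recorded in Lemma~\ref{ref}: since $\psi\simeq_{qp}\varphi$ by $F^{-1}_E:V\to U$ with $E=F(D)$, the identical construction produces a holomorphic map $T'\colon\Sat(F(D),\psi)\to\Sat(D,\varphi)$ extending $F^{-1}|_{E}$ and intertwining the two actions. A direct check shows $T'$ inverts $T$: for $y\in\Sat(D,\varphi)$ choose $g$ with $\varphi^g(y)\in D$; then $\psi^g(T(y))=F(\varphi^g(y))\in E$, so the same $g$ computes $T'(T(y))=\varphi^{g^{-1}}\circ F^{-1}\circ\psi^g(T(y))=\varphi^{g^{-1}}(\varphi^g(y))=y$, and symmetrically $T\circ T'=\Id$. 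Thus $T$ is the desired biholomorphism. The only genuine difficulty is the well-definedness step; everything else is formal once the two alternatives of $\simeq_{pq}$ are in hand, which is precisely the role that definition plays.
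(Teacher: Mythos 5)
Your proof is correct and follows essentially the same route as the paper: the same orbitwise definition $T(y)=\psi^{g^{-1}}\circ F\circ\varphi^{g}(y)$, well-definedness extracted from the two alternatives of Definition~\ref{forte} applied at the relevant point, and the inverse obtained from the symmetry recorded in Lemma~\ref{ref}. The only (harmless) difference is that you apply the alternatives directly to the transition element $h=g_2g_1^{-1}$, which uniformly covers the fixed-point case, whereas the paper factors the transition through an isotropy element $h\in G_{x,\varphi}$ and treats $y\in\Fix(\varphi)$ separately.
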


\begin{proof}[Proof]We define
\begin{eqnarray*}T: \Sat(D,\varphi) &\to& \Sat(F(D),\psi) \\ y &\mapsto& \psi^{g^{-1}} \circ F \circ \varphi ^g(y),
\end{eqnarray*}
where $g \in G$ satisfies $\varphi ^g(y) \in D$. We need to prove that
$T$ is well defined. If $y \in \mbox{Fix}(\varphi)$, this follows
from Lemma~\ref{ref}. Take $y \in M \menos \mbox{Fix}(\varphi)$
and suppose that there exist $\tilde{g} \ne g$ such that
$\varphi ^g(y)=x \in D$ e $\varphi ^{\tilde{g}}(y) = \tilde{x}\in D$. In
this case, there exists a $l \in G$ such that $\varphi ^l (x) =
\tilde{x}$. We have that $\varphi ^{l^{-1}} \circ \varphi ^{\tilde{g}}
\circ \varphi ^{g^{-1}} (x)=x$, that is, $l^{-1}* \tilde{g}* g^{-1} =h
\in G_{x, \varphi}.$ Since  $x \in D \cap \varphi ^{h^{-1}}(U)$, so $h
\in G_{F(x),\psi}$. Therefore
\begin{eqnarray*}\psi^{\tilde{g}^{-1}} \circ F \circ \varphi ^{\tilde{g}}(y) &=& \psi^{g^{-1}} \circ \psi^{h^{-1}} \circ \psi^{l^{-1}}
\circ F \circ \varphi ^l \circ \varphi ^h \circ \varphi ^g (y) \\ &=&
\psi^{g^{-1}} \circ \psi^{h^{-1}} \circ \psi^{l^{-1}} \circ F
\circ \overbrace{\varphi ^l \circ \underbrace{\varphi ^g (y)}_{\in
D}}^{\in D\subset U} \\ &=& \psi^{g^{-1}} \circ \psi^{h^{-1}}
\circ F \circ  \varphi ^g (y) \\ &=& \psi^{g^{-1}} \circ F \circ
\varphi ^g  (y).
\end{eqnarray*}
This implies that $T$ is well defined and that is holomorphic,
since $U$ is open.  Given $y \in \Sat(D,\varphi)$, there
exist $h \in G$ and $x \in D$ such that $\varphi ^h(y)=x$. If $z \in
G$, then
\begin{eqnarray*}T \circ \varphi ^z(y) = T \circ \underbrace{\varphi ^z \circ \varphi ^{h^{-1}}(x)}_{\in \Sat(D,\varphi)} &=&
 \psi^{g^{-1}} \circ F \circ \underbrace{\varphi ^{g*z*h^{-1}}(x)}_{ \in D} \\ &=& \psi^z \circ \underbrace{\psi^{h^{-1}} \circ F \circ \varphi ^h}_{T}(y).
\end{eqnarray*}
By Lema $\ref{ref}$, $\psi \simeq_{qp} \varphi$ by $F^{-1}_E: V \to
U$. Thus we can construct analogously the inverse of $T$,
 and the proof is complete.
\end{proof}


\begin{obs}\label{dic}\rm{Given a holomorphic  action $(\varphi,\C^*,M,p)$, consider the action
\begin{eqnarray*} \phi: \C \times M &\to& M \\  (z,x) &\to& \varphi (\exp(2 \pi \sqrt{-1}z), x).
\end{eqnarray*} Let $\xi$ be a local chart at $p$ and let $(\psi_p,\C^*,\C^n,0)$ be the action by linear transformations defined by the derivative of $\varphi$ at $p$, constructed from $\xi$. Let $X$ be the complete holomorphic vector field defined by $\varphi$. If we identify $\tilde{X}(x)=\sum_{j=1}^n a_j(x)\frac{\partial}{\partial z_j}$ with the map
$\tilde{X}(x)=(a_1(x), \dots, a_n(x))$, we have that $\psi_p$ is the flow of the linear vector field $Y(x)=D \tilde{X}(p) \cdot x$. Since the isotropy of $\psi_p$ at each point contains $\Z$, a Jordan canonical form for $D\tilde{X}(p)$ has no nilpotent part. Therefore, there is a linear isomorphism $A$ such that
$$A^{-1} \cdot \psi_p^z \cdot A(x) = \left(\exp(\lambda_1 2 \pi \sqrt{-1} z) x_1, \dots , \exp(\lambda_n 2 \pi \sqrt{-1} z) x_n\right),$$ where $\lambda_1,\dots , \lambda_n \in \Z$. We say that $p$ is a fixed point {\it of the dicritical type} if all  $\lambda_i$ are nonzero   with a same signal. This does not contradict the notion of dicriticity introduced above, about having all orbits near $p$ being contained in separatrices of the corresponding foliation $\fa_\vr$. }
\end{obs}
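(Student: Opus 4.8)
The plan is to verify, in the order they appear, the three assertions hidden inside the observation: (i) that $\psi_p$ is the time-flow of the linear field $Y(x)=D\tilde X(p)\cdot x$; (ii) that $D\tilde X(p)$ has no nilpotent part; and (iii) that it is diagonalizable with eigenvalues in $2\pi\sqrt{-1}\,\Z$, so that the stated diagonal normal form holds. The closing consistency remark about dicriticity will then be read off the linear model together with local linearizability.

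First I would establish (i) by the variational (linearized flow) equation. Working in the chart $\xi$, write $\tilde\phi^z=\xi\circ\phi^z\circ\xi^{-1}$ for the flow of $\tilde X$; since $\xi(p)$ is a fixed point, $\tilde\phi^z(\xi(p))=\xi(p)$ for all $z$. Differentiating the flow identity $\partial_z\tilde\phi^z(x)=\tilde X(\tilde\phi^z(x))$ with respect to $x$ at $x=\xi(p)$ and using the fixed-point property gives the linear ODE $\partial_z\,D\tilde\phi^z(\xi(p))=D\tilde X(\xi(p))\cdot D\tilde\phi^z(\xi(p))$, whose solution is $D\tilde\phi^z(\xi(p))=\exp\!\big(z\,D\tilde X(p)\big)$. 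Unwinding the definition of $\psi_p=(D\varphi)_p$ shows $\psi_p^z(x)=D\tilde\phi^z(\xi(p))\cdot x$, so $\psi_p^z=\exp(zD\tilde X(p))$ is exactly the flow of $Y$.

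The core step is (ii). The key point is that $\phi^z$ is $\Z$-periodic: since $\exp(2\pi\sqrt{-1}\,n)=1$ for $n\in\Z$ and $\varphi^1=\Id$, we get $\phi^n=\Id$ and hence $\psi_p^n=\Id$ for every $n\in\Z$; in particular $\exp(D\tilde X(p))=\psi_p^1=\Id$. I would then invoke the linear-algebra fact that a complex linear map $L$ with $\exp(L)=\Id$ is necessarily semisimple: writing the additive Jordan decomposition $L=S+N$ with $S$ semisimple, $N$ nilpotent, $SN=NS$, the factorization $\exp(L)=\exp(S)\exp(N)$ is the multiplicative Jordan decomposition of $\Id$, so uniqueness forces $\exp(N)=\Id$, and since $\exp(N)-\Id=N\big(\Id+\tfrac12N+\cdots\big)$ has an invertible (unipotent) cofactor this yields $N=0$. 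Equivalently, a nontrivial Jordan block of $L$ would produce a Jordan block of $\exp(L)$ of the same size, impossible for $\exp(L)=\Id$. This is the one genuinely non-formal point; everything else is bookkeeping. With $L=D\tilde X(p)$ semisimple, (iii) is immediate: choose $A$ with $A^{-1}LA$ diagonal; each eigenvalue $\mu_j$ satisfies $e^{\mu_j}=1$ from $\exp(L)=\Id$, hence $\mu_j=2\pi\sqrt{-1}\,\lambda_j$ with $\lambda_j\in\Z$, and substituting into $A^{-1}\psi_p^zA=\exp(zA^{-1}LA)$ gives the displayed form. The integrality of the $\lambda_j$ is precisely what guarantees that $s\circ x=(s^{\lambda_1}x_1,\dots,s^{\lambda_n}x_n)$ is a well-defined $\C^*$-action.

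Finally, for the concluding remark I would check the consistency directly on the linear model. If all $\lambda_j$ share one sign, say $\lambda_j>0$, then every nonsingular orbit $\{(s^{\lambda_1}x_1,\dots,s^{\lambda_n}x_n):s\in\C^*\}$ has all coordinates tending to $0$ as $|s|\to0$ and escapes every bounded set as $|s|\to\infty$; hence inside a small ball it accumulates only at the origin, and its closure is the invariant analytic curve $L\cup\{0\}$. Thus the linear action is dicritical in the geometric sense, with infinitely many separatrices. By Lemma~\ref{lli} the action $\varphi$ is locally conjugate to $\psi_p$ near $p$, so this accumulation behaviour transfers to $\varphi$, confirming that the algebraic condition (all $\lambda_j$ nonzero of one sign) does not contradict the geometric dicriticity introduced earlier.
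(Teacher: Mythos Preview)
Your proof is correct and follows the same line of reasoning that the paper embeds in the remark itself: the paper's one-sentence justification (``Since the isotropy of $\psi_p$ at each point contains $\Z$, a Jordan canonical form for $D\tilde X(p)$ has no nilpotent part'') is exactly what you unpack via the additive/multiplicative Jordan decomposition argument, and your variational-equation derivation of (i), the diagonalization in (iii), and the separatrix check via Lemma~\ref{lli} are the natural elaborations of claims the paper simply asserts.
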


\begin{proof}[Proof of Theorem~B] Take $\varphi$, $\psi_p$ and $\xi$ as in Remark $\ref{dic}$. We know that (up to linear conjugation)
\begin{equation}\label{flu}\psi_p (z, x) = \left(\exp(\lambda_1 2 \pi \sqrt{-1} z) x_1, \dots , \exp(\lambda_n 2 \pi \sqrt{-1} z) x_n\right),\end{equation}
where $x=(x_1,\dots,x_n)$, $\lambda_1,\dots , \lambda_n \in \Z$. Let $v = a - b \sqrt{-1} \in \C$.
The $j$-th coordinate of $\psi_p(z, x)$ is
\begin{equation}\label{coo}\exp\left(\lambda_j 2 \pi b \right) \cdot \exp\left(\lambda_j 2 \pi \sqrt{-1} a \right)x_j.\end{equation}
Since $p$ is of the dicritical type, there are no $\lambda_i$'s with different signals. Suppose that neither $\lambda_j$ is negative (the other case is identical). If $E$ is a open ball centered at origin, then
\begin{itemize}\item $\psi^z_p(E) \supset E$ if $b >0$,
               \item $\psi^z_p(E) \subset E$ if $b<0$ and
               \item$\psi^z_p(E)=E$ if $b =0$.
\end{itemize}
By Lemma $\ref{lli}$, there is a biholomorphism $F\colon  U \to V$ such that $\varphi \sim_{p0} \psi_p$ by $F$. Thus, $\psi_p \sim_{0p} \varphi$ by $F^{-1}: V \to U$. We can suppose that $V$ is an open ball. If we take an open ball $E \subset \subset V$ centered at origin, then follows from Lemma $\ref{comp}$ that $\psi_p \simeq_{0p} \varphi$ by $F^{-1}_E: V \to U$. So the desired result follows from Lemma $\ref{sat}$.
\end{proof}

\section{Actions of $\bc^*$ on Stein manifolds}

Our above results lead to natural variants of results proven for holomorphic actions on Stein manifolds. For instance, joining the results in \cite{Scardua} and \cite{Camacho-Scardua} we can state the following
global linearization theorem:

\begin{Theorem}[\cite{CamachoScardua2}, Theorem 1.1]
\label{Theorem:2} Let $X$ be a complete holomorphic vector field
with isolated singularities on a Stein manifold $M$ of dimension
$n \ge 2$. Assume that $X$ has isolated singularities and some
dicritical singularity with first jet of the form
$X_{(\la_1,\dots,\la_n)}= \sum\limits_{j=1}^n
\la_jz_j\,\dfrac{\po}{\po z_j}$\,, where $\la_j \in \mathbb
Q_+$\,, $\forall\,j$. If $\sing X$ is finite and
$\overset{\,\vee}{H}^{\!}{^2}(M,\bz) = 0$ then $X$ is
holomorphically conjugate to $X_{(\la_1,\dots,\la_n)}$\,. In
particular, $M$ is biholomorphic to $\bc^n$.
\end{Theorem}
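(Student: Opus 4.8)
The plan is to deduce the theorem from Theorem~\ref{Theorem:B} together with a globalization step. First I would turn the flow of $X$ into a holomorphic $\bc^*$-action. Completeness of $X$ makes its flow $\phi^t$ defined for all $t\in\bc$. Writing $\la_j=q_j/d$ with $q_j,d\in\bn$, the time-$2\pi\sqrt{-1}\,d$ map of the linear field $X_{(\la_1,\dots,\la_n)}$ is the identity, because $\exp(2\pi\sqrt{-1}\,d\,\la_j)=\exp(2\pi\sqrt{-1}\,q_j)=1$. The dicriticity of $p$ — the presence of infinitely many analytic separatrices, i.e.\ that $p$ is a quasi-homogeneous singularity — forces the local flow of $X$ near $p$ to be periodic of this same period, so $\phi^{2\pi\sqrt{-1}\,d}\equiv\Id$ on a neighborhood of $p$; since $M$ is connected, the Identity Principle propagates this to all of $M$. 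Hence $\phi$ descends to a holomorphic action $(\varphi,\bc^*,M,p)$ whose derivative action $(D\varphi)_p=\psi_p$ is, in suitable linear coordinates, the flow of $X_{(\la_1,\dots,\la_n)}$; as all $\la_j>0$, the point $p$ is an isolated fixed point of the dicritical type of Remark~\ref{dic}.

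Second, I would apply Theorem~\ref{Theorem:B} at $p$, producing a $\varphi$-invariant open set $A\ni p$, a $\psi_p$-invariant open set $B\ni 0$ and a biholomorphism $T\colon A\to B$ with $\psi_p^z\circ T\equiv T\circ\varphi^z$ for every $z\in\bc^*$. Here the positivity of the weights is decisive: every orbit of $\psi_p$ in $\bc^n\setminus\{0\}$ tends to $0$ as the real part of the time tends to $-\infty$, hence meets every ball about the origin, so the saturation of any such ball is all of $\bc^n$. Consequently the construction behind Theorem~\ref{Theorem:B} (through Lemma~\ref{sat}) can be arranged with $B=\bc^n$, and then $A$ is a $\varphi$-invariant open subset of $M$ biholomorphic via $T$ to $\bc^n$, with $T$ carrying $X|_A$ to $X_{(\la_1,\dots,\la_n)}$.

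The crux, and the step I expect to be the main obstacle, is to prove $A=M$. As $A$ is open and $\varphi$-invariant and $M$ is connected, it suffices to show $A$ is closed, i.e.\ $\po A=\varnothing$. The boundary $\po A$ is closed and $\varphi$-invariant, and since $A\cong\bc^n$ contains no fixed point besides $p$, the remaining (finitely many) points of $\sing X$ all lie in $M\setminus A$. I would rule out $\po A$ by passing to the orbit space: using that $M$ is Stein and the weights are positive, the restriction of $\varphi$ to $M\setminus\Fix(\varphi)$ is proper, so the quotient $\Om=(M\setminus\Fix(\varphi))/\bc^*$ is a compact complex orbifold which over $A$ is the weighted projective space $\bp(q_1,\dots,q_n)$. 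The hypotheses that $\sing X$ is finite and $\check H^2(M,\bz)=0$ are precisely what force the $\bc^*$-fibration $M\setminus\Fix(\varphi)\to\Om$ to have no further attracting centers and no extension obstruction: the vanishing of $\check H^2(M,\bz)$ kills the relevant Euler class, identifying $\Om$ globally with $\bp(q_1,\dots,q_n)$ and yielding $\po A=\varnothing$.

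Once $A=M$, the map $T\colon M\to\bc^n$ is a global biholomorphism conjugating $\varphi$ to $\psi_p$, hence carrying $X$ to $X_{(\la_1,\dots,\la_n)}$; in particular $M$ is biholomorphic to $\bc^n$. The genuinely delicate points — where the Stein hypothesis and the cohomological condition are indispensable — are the properness of the action off $\Fix(\varphi)$ and the identification of the orbit space with a weighted projective space; the rest is a formal consequence of Theorem~\ref{Theorem:B} and Lemma~\ref{sat}.
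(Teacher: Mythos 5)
You should first note a structural point: the paper does not prove this statement at all --- it is quoted verbatim from \cite{CamachoScardua2} and used as a black box in the proof of Theorem~\ref{Theorem:dicriticalStein} --- so your proposal has to be judged as a free-standing proof of the cited result. Your first two steps are in the right spirit. The passage from the complete field $X$ to a $\bc^*$-action via periodicity, and the use of Theorem~\ref{Theorem:B} with Lemma~\ref{sat} to produce an invariant basin $A$ with $B=\bc^n$ (your observation that $\Sat(E,\psi_p)=\bc^n$ when all weights are positive is correct), are sound. But already in step 1 you assert rather than prove the key local fact: the first jet $X_{(\la_1,\dots,\la_n)}$ with $\la_j\in\mathbb Q_+$ only places $p$ in the Poincar\'e domain, and resonant Poincar\'e--Dulac terms --- e.g. $x\frac{\po}{\po x}+(2y+x^2)\frac{\po}{\po y}$ --- are compatible with that first jet yet destroy both periodicity and dicriticity (this example has only the separatrix $x=0$). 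One must prove that dicriticity excludes all resonant monomials before local linearization, hence periodicity of the flow, is available; that is a genuine lemma, not a remark.

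The decisive gap is your step 3. The hypotheses permit finitely many singularities, only one of which is assumed dicritical; a priori there could be a nondicritical fixed point $q$ elsewhere. Near such a point --- model $s\cdot(x,y)=(sx,s^{-1}y)$ on $\bc^2\setminus\{0\}$ --- the $\bc^*$-action on $M\setminus\Fix(\varphi)$ is \emph{not} proper and the orbit space is not Hausdorff (the orbits $xy=c$ accumulate on both separatrices as $c\to 0$), so your "compact complex orbifold $\Om$" need not exist; its compactness is also unsupported (Steinness yields plurisubharmonic exhaustions, not compact quotients), and identifying $\Om$ over $A$ with $\bp(q_1,\dots,q_n)$ and then "closing up" is essentially assuming the conclusion $\po A=\varnothing$ rather than deriving it. Note also that the known role of $\overset{\,\vee}{H}^{\!}{^2}(M,\bz)=0$ on a Stein manifold is different from your Euler-class picture: by the exponential sequence and the vanishing of $H^1(M,\mathcal O)$ and $H^2(M,\mathcal O)$ it forces $H^1(M,\mathcal O^*)=0$, i.e.\ every divisor on $M$ is principal, which is what allows local invariant analytic data (separatrices, linearizing coordinates) to be realized by global holomorphic functions; it is not a statement about a fibration whose existence you have not established. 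The fact that $\sing X=\{p\}$ and that the action is "tame" off $A$ are consequences obtained at the end of the argument in \cite{CamachoScardua2}, not inputs one may posit. As written, your step 3 restates what must be proved, so the proposal does not constitute a proof; repairing it would require reproducing the global analysis of \cite{CamachoScardua2} (or \cite{Scardua} in the surface case), which is precisely why the present paper imports the theorem instead of proving it.
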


Applying our linearization results to this theorem we obtain:

\begin{theorem}\label{Theorem:dicriticalStein} Let $\vr$ be a $\bc^*$-action with isolated singularities on a Stein manifold $M$ of
dimension $n \ge 2$. Assume that $\vr$ has an isolated
dicritical singularity at $p\in M$. If  $\overset{\,\vee}{H}^{\!}{^2}(M,\bz) = 0$ then $\vr$ is analytically
linearizable, indeed, there is a biholomorphic map $F \colon M \to \bc^n$ which conjugates $\vr$
to the linear flow of  $X_{(\la_1,\dots,\la_n)}= \sum\limits_{j=1}^n
\la_jz_j\,\dfrac{\po}{\po z_j}$\,, where $\la_j \in \mathbb
Q_+$\,, $\forall\,j$. In
particular, $M$ is biholomorphic to $\bc^n$.
\end{theorem}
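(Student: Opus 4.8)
The plan is to reduce the global statement to the already-proven local linearization result (Theorem~B) and then propagate the local conjugation to the whole of $M$ using the dynamical structure of a good (dicritical) $\bc^*$-action together with the Stein/cohomological hypotheses. The key observation is that Theorem~\ref{Theorem:2} of \cite{CamachoScardua2} is stated for a complete holomorphic vector field $X$ with a dicritical singularity whose linear part is $X_{(\la_1,\dots,\la_n)}$ with positive rational eigenvalues, and it already yields the conclusion that $M \cong \bc^n$ and $X$ is conjugate to its linear part. So the bulk of the work is to produce, from the given $\bc^*$-action $\vr$, a complete holomorphic vector field $X$ to which that theorem applies.

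First I would pass from the action $\vr$ to its infinitesimal generator. Following Remark~\ref{dic} and the construction in the proof of Theorem~A, I associate to $\vr$ the flow $\phi(z,x)=\vr(\exp(2\pi\sqrt{-1}z),x)$ and let $X$ be the complete holomorphic vector field generating $\phi$; its singular set equals $\sing(\fa_\vr)$, which is finite by hypothesis. At the isolated dicritical singularity $p$, the linearization data from Remark~\ref{dic} gives eigenvalues $\lambda_1,\dots,\lambda_n\in\Z$, and dicriticality forces them all nonzero with the same sign; normalizing, I may take them positive. The content of Theorem~B is precisely that near $p$ the action $\vr$ is biholomorphically conjugate to the linear action $\psi_p$, i.e.\ to the flow of $X_{(\lambda_1,\dots,\lambda_n)}$. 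Thus the first jet of $X$ at $p$ has the required form (after clearing to rational/positive eigenvalues), so the hypotheses of Theorem~\ref{Theorem:2} are met: $X$ is complete, has finite singular set, a dicritical singularity with positive linear part, and $\overset{\,\vee}{H}{}^{2}(M,\bz)=0$.

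With these verifications in place, I would simply invoke Theorem~\ref{Theorem:2} to obtain a biholomorphism $F\colon M\to\bc^n$ conjugating $X$ to $X_{(\lambda_1,\dots,\lambda_n)}$, whence $M$ is biholomorphic to $\bc^n$. It remains to translate this conjugacy of vector fields back into a conjugacy of the $\bc^*$-actions. Since $X$ generates the flow $\phi$ and $\phi$ in turn recovers $\vr$ through $\vr(\exp(2\pi\sqrt{-1}z),\cdot)=\phi(z,\cdot)$, the flow conjugacy $F_*X=X_{(\lambda_1,\dots,\lambda_n)}$ yields $F\circ\phi^z=\phi_0^z\circ F$ for all $z\in\C$, where $\phi_0$ is the linear flow; evaluating at $z$ with $\exp(2\pi\sqrt{-1}z)=s$ gives $F\circ\vr^s=\psi^s\circ F$ for all $s\in\bc^*$. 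That is exactly the asserted analytic linearization of $\vr$.

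The main obstacle I anticipate is the bookkeeping at the passage between the integer eigenvalues $\lambda_j$ produced by the $\bc^*$-action and the positive rational eigenvalues $\la_j\in\bq_+$ demanded by Theorem~\ref{Theorem:2}: one must check that the dicriticality condition (all $\lambda_j$ nonzero of the same sign) genuinely matches the positivity hypothesis, and that rescaling the vector field and reindexing does not disturb completeness or the conjugacy. A subtler point is confirming that the \emph{local} dicriticality in the sense of Remark~\ref{dic} coincides with the geometric/foliation notion of a dicritical singularity used in Theorem~\ref{Theorem:2}, so that the two frameworks can be legitimately spliced together; the remark at the end of the proof of Theorem~B asserts this compatibility, but it is the step most worth stating carefully. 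Beyond that, everything is a direct application of the cited completeness and cohomological input.
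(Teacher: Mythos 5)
Your proposal is correct and follows essentially the same route as the paper's own proof: pass to the infinitesimal generator $X$ of the induced $2\pi\sqrt{-1}$-periodic flow, use Theorem~B to linearize near the dicritical fixed point $p$ and thereby identify the first jet of $X$ as $a\cdot\sum_{j}\la_j z_j\frac{\po}{\po z_j}$ with $\la_j\in\bq_+$, and then invoke Theorem~\ref{Theorem:2} of \cite{CamachoScardua2}. Your extra care about the sign normalization of the integer eigenvalues and about converting the flow conjugacy back into a conjugacy of $\bc^*$-actions is sound bookkeeping that the paper leaves implicit (it absorbs the sign into the constant $a\in\bc\setminus\{0\}$), but it does not constitute a different argument.
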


\begin{proof}
The $\bc^*$-action $\vr$ induces a holomorphic flow $\psi$ on $M$ by $ \psi_t(x)=
\vr_{e^t}(x), \, x \in M,  \, \,  t \in \bc$. The flow $\psi$ is $2\pi \sqrt{-1}$-periodic and induces a complete
holomorphic vector field $X$ on $M$ by $X(x) = \frac{\partial \psi_t (x)}{\partial t}\big|_{(t=0)}, \, \forall x \in M$.
By hypothesis $p\in M$ is a dicritical singularity for the action $\vr$ and by Theorem~B there is an analytic linearization for $\vr$ and therefore for $X$ in an invariant  neighborhood $W$ of $p$ in $M$. This implies that  $X$
has a first jet at $p$ of the form $X_{(\la_1,\dots,\la_n)}= a\cdot\sum\limits_{j=1}^n
\la_jz_j\,\dfrac{\po}{\po z_j}$\,, where $\la_j \in \mathbb
Q_+$\,, $\forall\,j$ for some $a\in \bc\setminus \{0\}$.
Applying then  Theorem~\ref{Theorem:2} above we conclude the proof.
\end{proof}

Next we obtain a sort of extension of Suzuki's linearization theorem in \cite{Suzuki2} to $\bc^*$-actions on
Stein surfaces:

\begin{theorem}
\label{Theorem:surface} Let $\vr$ be a $\bc^*$-action with isolated singularities on a Stein surface $N^2$ with $H^2(N^2,\mathbb Z)=0$ and $H_1 (N^2,\bc)=0$.
The we have the following possibilities:

\begin{itemize}
\item[{\rm(i)}] $\vr$ has a nondicritical singularity $p \in N^2$ and the action $\vr$ admits
a holomorphic first integral $F\colon V \to R$ {\rm(}$R=\mathbb D$
or $R=\bc${\rm)} of the form $F=f_1 ^{n_1} f_2 ^{n_2}$ where
$(f_j=0)$, j=1,2, are irreducible curves. The foliation $\fa_\vr$
is the pull-back of the linear foliation $\fa_1$ given on
$\bc^2_{(x,y)}$ by the vector field $X=x\frac{\partial}{\partial x}  -
y\frac{\partial}{\partial y}$ and corresponding to the action $\ro
\colon \bc^* \times \bc^2 \to \bc ^2$, $\ro (s,(x,y))=(s x,s^{-1}
y)$. The map $\mu:=(f_1^{n_1}, f_2^{n_2})$ is bijective as a map
between leaf spaces.

\item[{\rm(ii)}]  $\vr$ has a dicritical singularity, $\vr$ is globally linearizable and $N^2$ is biholomorphic to $\bc^2$.
\end{itemize}
\end{theorem}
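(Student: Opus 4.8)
The plan is to argue by a dichotomy on the type of the isolated singularities of $\vr$. First I would linearize $\vr$ at each of its isolated fixed points as in Remark~\ref{dic}: in suitable coordinates the induced flow reads $\psi_p^z(x)=\left(\exp(\la_1 2\pi\sqrt{-1}z)x_1,\exp(\la_2 2\pi\sqrt{-1}z)x_2\right)$ with $\la_1,\la_2\in\Z$. Because the singularity is \emph{isolated}, neither $\la_j$ can vanish, since a zero eigenvalue would produce a whole curve of fixed points. Hence at each singularity either $\la_1,\la_2$ have the same sign, so the point is of the dicritical type, or they have opposite signs, which is the nondicritical (saddle) case carrying the local invariant function $x_1^{|\la_2|}x_2^{|\la_1|}$. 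This gives the clean alternative between the hypotheses of (i) and (ii).

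Next I would dispose of alternative~(ii). Suppose some singularity $p$ is dicritical. Reversing the flow parameter if necessary, i.e. replacing the generating vector field $X$ by $-X$ (equivalently $s\mapsto s^{-1}$ in the action), we may assume $\la_1,\la_2>0$, so that the first jet of $X$ at $p$ has the form $X_{(\la_1,\la_2)}$ with $\la_j\in\mathbb Q_+$. Since $H^2(N^2,\mathbb Z)=0$ supplies the Čech hypothesis $\overset{\,\vee}{H}^{\!}{^2}(N^2,\mathbb Z)=0$ required by Theorem~\ref{Theorem:dicriticalStein} (the two cohomologies agree on a manifold), that theorem produces a biholomorphism $N^2\cong\bc^2$ conjugating $\vr$ to a linear diagonal action; in particular $\vr$ is globally linearizable, which is precisely~(ii). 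I would also record that a linear diagonal action with eigenvalues of the same sign has the origin as its only fixed point, so the existence of one dicritical singularity rules out any nondicritical one: the two alternatives are genuinely exclusive.

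It then remains to treat the case in which every singularity is nondicritical, yielding~(i). Here I would invoke Suzuki's theorem \cite{Suzuki1}, which furnishes a meromorphic first integral for $\fa_\vr$ on the Stein surface $N^2$, and then upgrade it to the stated normal form following the leaf-space linearization of \cite{Scardua, Camacho-Scardua}. Concretely, at a saddle singularity the two local separatrices are the coordinate axes; I would extend them to global irreducible invariant curves $(f_1=0)$ and $(f_2=0)$ and assemble the holomorphic first integral $F=f_1^{n_1}f_2^{n_2}$ with values in $R=\mathbb D$ or $R=\bc$, the exponents $n_1,n_2$ being read off from the eigenvalue ratio at the saddle. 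Since the generic level set $xy=c$ of $x\frac{\partial}{\partial x}-y\frac{\partial}{\partial y}$ pulls back under $\mu=(f_1^{n_1},f_2^{n_2})$ to $\{F=c\}$, this exhibits $\fa_\vr$ as the pull-back $\mu^{*}\fa_1$, and the assertion that $\mu$ is bijective on leaf spaces is then the transcription of the Camacho–Scardua statement into the present setting.

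The main obstacle is the globalization step in~(i): passing from the local invariant axes to genuine global defining functions $f_1,f_2$ and to a single-valued holomorphic first integral. This is exactly where the cohomological hypotheses are consumed. The vanishing $H^2(N^2,\mathbb Z)=0$ makes the invariant divisors principal, so each curve $(f_j=0)$ admits a global holomorphic equation $f_j$, while $H_1(N^2,\bc)=0$ forces the closed logarithmic form $n_1\,df_1/f_1+n_2\,df_2/f_2$ governing the foliation to be exact, hence integrable to the global $F$. Checking that there are no further singularities obstructing this construction, and that $F$ has connected generic fibers so that $\mu$ descends to an honest bijection between the (non-Hausdorff) leaf spaces, is the technical heart of the argument; once it is in place, the remaining identifications in~(i) are formal, and combining the two cases completes the proof.
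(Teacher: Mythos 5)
Your overall architecture matches the paper's proof (a dichotomy on the type of the singularity, Theorem~\ref{Theorem:dicriticalStein} for the dicritical alternative, and delegation of the nondicritical alternative to \cite{Camacho-Scardua}, \cite{CamachoScardua2}), but there is a genuine gap at the very first step: you tacitly assume $\vr$ has at least one fixed point. ``With isolated singularities'' does not mean the singular set is nonempty, and a fixed-point-free $\bc^*$-action satisfies neither alternative (i) nor (ii), so the dichotomy you set up at the fixed points does not yet prove the stated trichotomy-free conclusion. This is precisely the hole the paper closes first, and it is where the hypothesis $H_1(N^2,\bc)=0$ is actually consumed: since $N^2$ is Stein, Suzuki \cite{Suzuki1} provides a meromorphic first integral for $\fa_\vr$, which one may take primitive onto an open Riemann surface $R\in\{\bc,\mathbb D\}$ (in the sense of \cite{Suzuki2}, as explained in \cite{CamachoScardua2}); if $\vr$ had no fixed point, this first integral would define a holomorphic fibration with fibers $\bc^*$, topologically a cylinder, contradicting $H_1(N^2,\bc)=0$. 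The example $N^2=\bc^*\times\bc$ with $\vr$ acting on the first factor shows this step cannot be skipped. In your proposal $H_1(N^2,\bc)=0$ is instead spent on integrating the logarithmic form in case (i) --- a use internal to the machinery already contained in the cited reference --- while the existence of a singularity is nowhere established.

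The remainder of your argument is essentially sound and parallel to the paper's. The local dichotomy at a fixed point (both $\la_j\neq 0$ by isolatedness, via Lemma~\ref{lli} and Remark~\ref{loc}; same sign gives the dicritical type of Remark~\ref{dic}, opposite signs the nondicritical saddle) is a correct surface-case reading, and your treatment of (ii), replacing $X$ by $-X$ if necessary and invoking Theorem~\ref{Theorem:dicriticalStein} with the identification of \v{C}ech and singular cohomology on a manifold, is exactly the paper's route. For (i), the paper simply cites the nondicritical classification; your sketch re-deriving $F=f_1^{n_1}f_2^{n_2}$ and the pull-back description of $\fa_\vr$ is plausible, but you yourself flag its ``technical heart'' (global equations for the separatrices, connectedness of generic fibers, bijectivity of $\mu$ on leaf spaces) as unchecked --- citing \cite{Camacho-Scardua} directly, as the paper does, is the clean repair. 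With the Suzuki first-integral/fibration step inserted to guarantee that a singularity exists, your proof becomes complete and coincides in substance with the paper's.
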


\begin{proof} Because $N^2$ is Stein there is a meromorphic first integral $f\colon N^2 \dashrightarrow  \bc P(1)$ for $\vr$ on $N^2$ (cf. \cite{Suzuki1}). We can also assume that this first integral is primitive and is onto an open Riemann surface $R \in \{\bc, \mathbb D\}$ in the sense of \cite{Suzuki2} as explained in \cite{CamachoScardua2}. If  $\vr$ has no fixed point on $N^2$ then $f$ defines a holomorphic fibration what is not possible thanks to the hypothesis $H_1(N^2,\bc)=0$ (recall that the fibers are $\bc^*$ which is topologically a cylinder). Therefore $\vr$ must have some fixed point, i.e., some singularity for $\fa_\vr$. If the singularity is nondicritical then we apply \cite{CamachoScardua2}. If the singularity is dicritical then we apply Theorem~\ref{Theorem:dicriticalStein} above (essentially, \cite{Scardua}).

\end{proof}

\noindent{\bf Data availability statement}: 

\noindent  This manuscript is available at 
https://arxiv.org/abs/2408.09625.

\bibliographystyle{amsalpha}

\vglue.1in

\end{document}